\date{\today}
\newcommand{\Z}{{\mathbb Z}}
\newcommand{\R}{{\mathbb R}}
\newcommand{\p}{\parallel}
\newcommand{\be}{\begin{equation}}
\newcommand{\ee}{\end{equation}}
\newcommand{\ol}{\overline}
\newcommand{\ti}{\tilde}
\newcommand{\spr}[2]{\langle #1 , #2 \rangle}
\newcommand{\E}{\mathrm{e}}
\newcommand{\tr}{\mathrm{tr}}
\DeclareMathOperator{\dist}{dist}
\newcommand{\eps}{\varepsilon}
\newtheorem{theorem}{Theorem} [section]
\newtheorem{remark}[theorem]{Remark}
\newtheorem{lemma}[theorem]{Lemma}
\newtheorem{proposition}[theorem]{Proposition}
\newtheorem{corollary}[theorem]{Corollary}
\newtheorem{definition}[theorem]{Definition}
\numberwithin{equation}{section}
\begin{document}

\title{Discontinuity of the Lyapunov exponent}

\author[Z.\ Gan]{Zheng Gan}

\address{Department of Mathematics, Rice University, Houston, TX~77005, USA}
\email{\href{mailto:zheng.gan@rice.edu}{zheng.gan@rice.edu}}
\urladdr{\href{http://math.rice.edu/~zg2/}{http://math.rice.edu/$\sim$zg2/}}

\author[H.\ Kr\"uger]{Helge Kr\"uger}

\address{Department of Mathematics, Rice University, Houston, TX~77005, USA}
\email{\href{mailto:helge.krueger@rice.edu}{helge.krueger@rice.edu}}
\urladdr{\href{http://math.rice.edu/~hk7/}{http://math.rice.edu/$\sim$hk7/}}

\thanks{Z.\ G.\ was supported by NSF grant DMS--0800100.}
\thanks{H.\ K.\ was supported by NSF grant DMS--0800100 and a Nettie S. Autrey Fellowship.}

\date{\today}

\keywords{Schr\"odinger operator, Lyapunov exponent, limit-periodic potential}
\subjclass[2000]{34D08; 39A70, 47A10}

\begin{abstract}
We study discontinuity of the Lyapunov exponent. We construct a
limit-periodic Schr\"odinger operator, of which the Lyapunov
exponent has a positive measure set of discontinuities. We also
show that the limit-periodic potentials, whose Lyapunov exponent is
discontinuous, are dense in the space of limit-periodic potentials.
\end{abstract}

\maketitle

\section{Introduction}

In this paper, we construct examples of ergodic Schr\"odinger operators
$H_{\omega}$, whose Lyapunov exponent $L(E)$ is discontinuous.
In addition to being a result of interest of its own, we were motivated
by the following observation. Denote by
\be
 \mathcal{Z} = \{E: \quad L(E) = 0\}
\ee
the set where the Lyapunov exponent vanishes. It was shown by
Deift and Simon in \cite{deisi} that the measure of this set
satisfies
\be\label{eq:deisi}
 |\mathcal{Z}| \leq 4.
\ee
Introduce the essential closure of $\mathcal{Z}$ by
\be
 \ol{\mathcal{Z}}^{\mathrm{ess}} = \{E:\quad \forall \eps > 0:\ |\mathcal{Z} \cap (E - \eps, E + \eps)| > 0\}.
\ee
Denote by $\sigma_{\mathrm{ac}}(H_{\omega})$ the absolutely
continuous spectrum of $H_{\omega}: \ell^2(\Z) \to \ell^2(\Z)$.
By Kotani theory, we have that for almost every $\omega$
\be\label{eq:kotani}
 \sigma_{\mathrm{ac}}(H_{\omega}) = \ol{\mathcal{Z}}^{\mathrm{ess}}.
\ee
This result can for example be found in \cite{da1}, \cite{sikot}, or
Theorem~5.17 in \cite{tjac}. Being naive, one might assume that
from these two statements that
\be\label{eq:bigquestion}
 |\sigma_{\mathrm{ac}}(H_{\omega})| \leq 4
\ee
holds for almost every $\omega$. However, we do not know if
this is true and trying to show this was a big motivation
to write this paper. To see that \eqref{eq:bigquestion}
cannot be a simple consequence of \eqref{eq:deisi} and \eqref{eq:kotani}
let $\mathcal{Z}$ be the complement of a Cantor set of
measure $> 4$ in $[-4,4]$. Then \eqref{eq:deisi} holds,
but $\ol{\mathcal{Z}}^{\mathrm{ess}} = [-4,4]$ is a set of
measure $8$.

Let us now discuss our main result and its relevance to this
conjecture. We begin by introducing some notation.
Let $(\Omega,\mu)$ be a probability space, $T:\Omega\to\Omega$ an
invertible ergodic transformation, and $f:\Omega\to \R$ a bounded
measurable function. For $\omega\in\Omega$ we introduce the
potentials $V_\omega(n)=f(T^n\omega)$. $H_{\omega} = \Delta + V_{\omega}$
denotes the Schr\"odinger operator with potential $V_{\omega}$.
Here $\Delta u (n) = u(n+1) + u(n-1)$ denotes the discrete Laplacian.
We introduce the transfer matrices
\be
 A_{N}(E, V_{\omega}) = \prod_{n=1}^{N} \begin{pmatrix} E - V_{\omega}(N- n) & -1
  \\ 1 & 0 \end{pmatrix}.
\ee
The Lyapunov exponent is then defined by
\be\label{eq:defLE}
 L(E) = \lim_{N\to\infty} \frac{1}{N} \int_{\Omega} \log\|A_{N}(E, V_{\omega})\| d\mu(\omega).
\ee
Our main result is

\begin{theorem}\label{thm:intA}
 There exists $(\Omega,T,f)$ such that $L(E)$ vanishes for generic $E \in [-4,4]$.
 Furthermore for every $\omega\in\Omega$, we have that
 $[-4,4] \subseteq \sigma(H_{\omega})$.
\end{theorem}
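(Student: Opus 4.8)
The plan is to construct $(\Omega, T, f)$ as a limit-periodic system, i.e. as an inverse limit of periodic sequences, and to exploit the fact that for periodic potentials the Lyapunov exponent is identically zero on the bands, which together are intervals, while for the limit the Lyapunov exponent can fail to converge to zero on a set that is topologically small (meager) but nonempty in a persistent way. Concretely, I would take $\Omega$ to be a Cantor group — a projective limit $\varprojlim \Z/(q_k\Z)$ for a suitably fast-growing sequence $q_k$ with $q_k \mid q_{k+1}$ — with $T$ the minimal translation by the element $(1,1,1,\dots)$ and $\mu$ the Haar measure; continuous functions $f$ on $\Omega$ are exactly the uniform limits of $q_k$-periodic functions, so $V_\omega$ is limit-periodic. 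The function $f$ itself I would build as a sum $f = \sum_k g_k$ where $g_k$ is $q_k$-periodic, $\|g_k\|_\infty$ is summable (so the sum converges uniformly and defines a legitimate limit-periodic potential), but where each $g_k$ is chosen so that the periodic operator with potential $\sum_{j\le k} g_j$ has all its bands inside $[-4,4]$, in fact covering more and more of $[-4,4]$ (a ``spectrum-filling'' construction).

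For the second assertion — that $[-4,4]\subseteq\sigma(H_\omega)$ for \emph{every} $\omega$ — I would use that for ergodic (in particular limit-periodic, minimal) potentials the spectrum is $\omega$-independent, so it suffices to control $\sigma$ for one, or equivalently for the almost-periodic hull; and I would arrange the periodic approximants $V^{(k)} = \sum_{j\le k} g_j$ so that $\sigma(H^{(k)})$, the union of the bands of the $q_k$-periodic problem, fills up $[-4,4]$ as $k\to\infty$, say $\sigma(H^{(k)}) \supseteq [-4,4] \setminus F_k$ with $|F_k|\to 0$ and more importantly with $F_k$ a shrinking union of tiny gaps. Since $V^{(k)}\to V_\omega$ uniformly, $H^{(k)}\to H_\omega$ in norm, and spectra are continuous in norm (the spectrum cannot suddenly acquire a large gap under small perturbations in the appropriate sense — here one uses that each point of $[-4,4]$ is within $o(1)$ of $\sigma(H^{(k)})$, hence within $o(1)$ of $\sigma(H_\omega)$, hence in $\sigma(H_\omega)$ since the latter is closed). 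This is the standard mechanism by which limit-periodic operators can have spectrum equal to a prescribed interval (cf. Avila's and Pöschel's constructions), so I would cite or adapt that.

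The heart of the matter — and the main obstacle — is making $L(E)$ vanish for generic $E\in[-4,4]$, i.e. on a dense $G_\delta$. Here ``generic'' must be read in the Baire-category sense, and the point is that $L$ for the limit cannot be the uniform limit of the (identically zero on bands) Lyapunov exponents of the approximants, because $L$ is only upper semicontinuous in the potential, not continuous — indeed that is precisely the ``discontinuity of the Lyapunov exponent'' advertised in the title. The strategy I would use is: for each $k$, the set $U_k = \{E \in (-4,4): E \text{ lies in the interior of a band of } H^{(k)}\}$ is open and, by the spectrum-filling property, dense in $[-4,4]$; on $U_k$ one has $L^{(k)}(E)=0$ exactly, where $L^{(k)}$ is the Lyapunov exponent of the periodic approximant. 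By upper semicontinuity of $L$ as a function of the potential, for $E$ fixed one has $L(E)\le \liminf_k L^{(k)}(E)$ is the wrong direction; instead I would use quantitative control: choose $g_{k+1}$ \emph{after} $g_1,\dots,g_k$ and \emph{so small in sup norm} (using the still-available freedom, since only summability of $\|g_k\|_\infty$ is required) that on a large compact subset $K_k\subseteq U_k$ the transfer matrix norms $\|A_N(E,V_\omega)\|$ stay sub-exponential up to a length scale $N_k\to\infty$; a Gronwall/telescoping estimate comparing $A_N(E,V_\omega)$ to $A_N(E,V^{(k)})$ shows $\tfrac1N\int\log\|A_N\|$ is small on $K_k$ for $N\le N_k$, and then the defining limit \eqref{eq:defLE} forces $L(E)=0$ on $\bigcap_k (\text{such } E)$, which one arranges to be a dense $G_\delta$ in $[-4,4]$ by making $\bigcup_k K_k$ dense at each stage. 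Executing this telescoping estimate with the right order of quantifiers — choosing $N_k$, then $K_k$, then the smallness of $g_{k+1}$, while simultaneously keeping the band structure filling $[-4,4]$ — is the delicate bookkeeping that the proof must carry out carefully.
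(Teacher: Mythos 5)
Your high-level architecture (inductive periodic approximation, Hausdorff-density of the approximants' spectra in $[-4,4]$, Baire category for the generic vanishing) matches the paper, but the core technical mechanism you propose does not work as stated. You want to choose $g_{k+1}$ \emph{small in sup norm} so that a Gronwall/telescoping comparison of $A_N(E,V)$ with $A_N(E,V^{(k)})$ controls the transfer matrices, while simultaneously having the bands of $V^{(k+1)}$ fill in more of $[-4,4]$. These two requirements fight each other quantitatively. Since $\sigma(\Delta+V^{(k+1)})$ lies in a $\|g_{k+1}\|_\infty$-neighborhood of $\sigma(\Delta+V^{(k)})$ (spectra of self-adjoint operators move by at most the norm of the perturbation), closing up a gap of size $\epsilon_k$ forces $\|g_{k+1}\|_\infty\gtrsim\epsilon_k$; and since a $p_k$-periodic operator has at most $p_k$ bands of total length $\leq 4$ while $|[-4,4]|=8$, being $\epsilon_k$-dense forces $\epsilon_k\geq 2/(p_k+1)$. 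So the increments can be at best polynomially small in the period, whereas a telescoping estimate valid out to scales $N_k$ large enough for $\frac{1}{N_k}\log\|A_{N_k}(E,V^{(k)})\|$ to be small (which requires $N_k$ at least comparable to $p_k$ times the relevant transfer-matrix constants) demands increments that are far smaller. Relatedly, your ``$|F_k|\to 0$'' is impossible: $|\sigma(\Delta+V^{(k)})|\leq 4$ always, so only Hausdorff density, never density in measure, can be achieved — and this is precisely why the Lyapunov exponent of the limit must be discontinuous.

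The paper avoids the perturbation estimate entirely. Property (ii) of its construction makes $V^k$ agree with $V^{k-1}$ \emph{exactly} on $[0,m\tilde p_{k-1}]$, a fraction $m/(m+8)$ of the new period, so that $A_{p_k}(E,V)=A_{p_k}(E,V^k)$ with no comparison argument, and $A_{\hat p_m}(E,\hat V_m)=(A_{\tilde p_{k-1}}(E,V^{k-1}))^m\cdot\tilde A$ exactly. The modification (shifts by $j/2^{k+1}$, which are \emph{not} small relative to the gap scale — they are exactly what moves the bands into the remaining gaps) is confined to the last $8$ blocks of the period, and the smallness of $\frac{1}{p_k}\log\|A_{p_k}(E,V^k)\|$ on $\sigma(\Delta+V^l)$ for all $l\leq k$ is obtained by taking the repetition parameter $m$ large (Dini's theorem on the bands plus submultiplicativity), not by taking the perturbation small. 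Finally, your last step also needs repair: the pointwise control one obtains is only for almost every $E$ in each $\sigma(\Delta+V^l)$ (one must pass from $\liminf_N\frac1N\log\|A_N\|$ to $L(E)$, which the paper does via unique ergodicity), so the paper first proves $\liminf_{E\to E_0}L(E)=0$ for \emph{every} $E_0\in[-4,4]$ using a nested sequence of spectral intervals, and only then runs the Baire argument with the continuous, decreasing functions $L_N(E)=2^{-N}\int\log\|A_{2^N}(E,V_\omega)\|\,d\mu$.
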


Our proof does not give any control on the measure of the
set $\mathcal{Z}$, where the Lyapunov exponent vanishes.
However, it is sufficient to show that

\begin{corollary}\label{cor:intA}
 We have that $L(E)$ is discontinuous on a set of
 $E$ of positive Lebesgue measure.
\end{corollary}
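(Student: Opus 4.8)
The plan is to deduce the corollary from Theorem~\ref{thm:intA} and the Deift--Simon bound~\eqref{eq:deisi} by an entirely soft argument; no transfer matrix estimates enter, since all of the analytic work is already packaged into Theorem~\ref{thm:intA}. Fix the triple $(\Omega,T,f)$ furnished by that theorem, and let $\mathcal{G}\subseteq[-4,4]$ denote the set of energies $E$ for which $L(E)=0$; by hypothesis $\mathcal{G}$ is generic, i.e.\ comeager, in $[-4,4]$. The first step is to note that $\mathcal{G}$ is in fact dense in $[-4,4]$: the interval $[-4,4]$, being a complete metric space, is a Baire space, so its comeager subsets have dense complement's complement, hence are dense. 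In particular $\mathcal{G}\subseteq\mathcal{Z}$ and $\overline{\mathcal{G}}\supseteq[-4,4]$.

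Next I would isolate the energies at which $L$ is forced to jump. Put $\mathcal{D}=[-4,4]\setminus\mathcal{Z}$. From~\eqref{eq:deisi} we have $|\mathcal{Z}|\le 4$, while $|[-4,4]|=8$, so $|\mathcal{D}|\ge 4>0$. Fix $E\in\mathcal{D}$. On one hand $E\notin\mathcal{Z}$, so $L(E)>0$ (recall that $L\ge 0$, since the transfer matrices have determinant one and hence norm at least one). On the other hand, density of $\mathcal{G}$ in $[-4,4]$ provides a sequence $E_j\to E$ with $E_j\in\mathcal{G}$, so $L(E_j)=0$ for every $j$. Therefore $\lim_{j\to\infty}L(E_j)=0\neq L(E)$, so $L$ is not continuous at $E$. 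As $E\in\mathcal{D}$ was arbitrary, $L$ is discontinuous at every point of $\mathcal{D}$, a set of Lebesgue measure at least $4$; this is the assertion of the corollary.

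The points requiring care here are interpretive rather than technical. First, one must read ``generic'' in Theorem~\ref{thm:intA} as ``comeager'', so that the Baire category theorem turns it into the density of $\mathcal{G}$ used above. Second, and more essentially, the positive-measure conclusion does not follow from density alone, since a comeager set may be Lebesgue-null; it is precisely the quantitative estimate $|\mathcal{Z}|\le 4$ of Deift and Simon that keeps $\mathcal{Z}$ from exhausting $[-4,4]$ and thereby forces $\mathcal{D}$ to carry at least half of the Lebesgue measure of $[-4,4]$. I expect no real obstacle in this corollary; the genuine difficulty resides in the construction underlying Theorem~\ref{thm:intA}.
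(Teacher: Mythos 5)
Your argument is correct and is essentially the paper's own proof, stated in contrapositive form: the paper shows the continuity set is contained in $\mathcal{Z}$ and hence has measure at most $4$, while you show directly that every point of $[-4,4]\setminus\mathcal{Z}$ (a set of measure at least $4$) is a discontinuity point; both rest on exactly the same two ingredients, the density of the vanishing set from Theorem~\ref{thm:intA} and the Deift--Simon bound \eqref{eq:deisi}.
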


\begin{proof}
 Denote by $A$ the set of $E_0 \in [-4,4]$ such that
 $L(E)$ is continuous at $E_0$. By Theorem~\ref{thm:intA},
 we have that $A \subseteq \mathcal{Z}$. Hence
 by \eqref{eq:deisi}, we have that $|A| \leq 4$.
 In particular, the Lyapunov exponent is discontinuous
 on $B = [-4,4] \setminus A$ with measure $|B| \geq 4$.
\end{proof}

Our proof of Theorem~\ref{thm:intA} is done by constructing
a limit-periodic potential explicitly with these properties.
We will discuss the details in the next section, where we also
show that for a dense set of limit-periodic potentials the
Lyapunov exponent has at least one discontinuity.
This is achieved by adapting the argument of Johnson \cite{john}
to the discrete setting.

We furthermore wish to point out that Thouvenot has constructed
in \cite{thuv} discontinuity points of the Lyapunov exponent
for matrix-valued cocycles.

\bigskip

Let us now discuss the relevance to whether \eqref{eq:bigquestion}
holds. One can show that if $A$ is a closed set,
then $|\ol{A}^{\mathrm{ess}}| = |A|$. Now, the
existence of large sets of discontinuity of the
Lyapunov exponent stops us from concluding that
$\mathcal{Z}$ is closed and thus \eqref{eq:bigquestion}.

\bigskip

Artur Avila has informed us of an alternative proof of
Theorem~\ref{thm:intA}, which is based on the work of Bochi and
Viana \cite{bv}. The main idea is that if $T$ has plenty of periodic
points, then one can ensure that $|\sigma(\Delta + f(T^n \omega))| >
4$ for almost every $\omega$. Then one uses to show the results of
\cite{bv} to conclude that by perturbing $f$ slightly the Lyapunov
exponent $L(E)$ vanishes on a dense set in $\sigma(\Delta + f(T^n
\omega))$, and still $|\sigma(\Delta + f(T^n \omega))| > 4$. Then
discontinuity of the Lyapunov exponent follows as in the proof of
Corollary~\ref{cor:intA}.

%
%
%
%
%

\section{Limit-periodic potential and further results}

We will begin this section by discussing some basics
about limit-periodic potentials. For further informations,
we refer to the appendix of the paper \cite{as} of
Avron and Simon, and to the survey by Gan \cite{g}.

Given a bounded sequence $V: \Z \to \R$, we denote by
$\Omega_V$ the hull of its translates. That is
\be
 \Omega_V = \ol{\{V_m,\quad m \in \Z\}}^{\ell^{\infty}(\Z)},
\ee
where $V_m(n) = V(n - m)$. If $\Omega_V$ is compact
in the $\ell^{\infty}(\Z)$ topology, then $V$ is
called {\em almost-periodic}.
The shift map on $\ell^\infty(\Z)$ becomes a translation
on the group $\Omega_V$ and it is uniquely
ergodic with respect to the Haar measure of $\Omega_V$.

$V$ is called {\em limit-periodic}, if there exists a sequence
of periodic potentials $V^k$ such that
\be
 V = \lim_{k \to \infty} V^k
\ee 
in the $\ell^\infty(\Z)$ topology. It should be remarked
that limit-periodic $V$ are almost-periodic. In fact, then
$\Omega_V$ has the extra structure of being a Cantor group,
see \cite{g} for details.

The proof of Theorem~\ref{thm:intA}
will proceed by explicitly constructing such
sequences of $V^k$. We will furthermore denote
by $L(E, V^k)$ the Lyapunov exponent of such a
sequence of periodic potentials.

Let us recall some properties of a $p_k$ periodic potentials $V^k$.
\begin{enumerate}
 \item The spectrum $\sigma(\Delta + V^k)$ is a finite union of intervals.
  That is
  \be
   \sigma(\Delta + V^k) = \bigcup_{j=1}^{g} [\alpha_j, \beta_j],
  \ee
  where $g \geq 2$ and $\alpha_j < \beta_j$.
 \item The Lyapunov exponent $L(E, V^k)$ is continuous
  and vanishes on $L(E, V^k)$.
\end{enumerate}
It should furthermore be pointed out that, one always
has $|\sigma(\Delta + V^k)| \leq 4$.

Let us now comment further on Theorem~\ref{thm:intA}.

\begin{remark}
 The potential constructed in Theorem~\ref{thm:intA}
 is limit-periodic. In particular, we provide an example
 of a limit-periodic potential whose spectrum contains an interval.
\end{remark}

This is not the first known example with this property,
since P\"oschel provided some in \cite{poesch}. However,
our construction has the advantage of being relatively
elementary in comparison to P\"oschel's KAM-type proof.

\bigskip

In order to state our other result,
we denote by $\mathcal{L}$ the space of all limit-periodic
potentials. We have

\begin{theorem} \label{thm:main2}
 There is a dense set of $V$ in $\mathscr{L}$ for which the Lyapunov
 exponent $L(E)$ is discontinuous.
\end{theorem}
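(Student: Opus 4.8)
The plan is to combine a density/approximation argument with a discrete adaptation of Johnson's gap-collapse mechanism from \cite{john}. Since every limit-periodic potential is by definition an $\ell^\infty$-limit of periodic ones, the periodic potentials are dense in $\mathscr{L}$; hence it suffices to show that arbitrarily close (in $\ell^\infty(\Z)$) to any given periodic potential $W$ one can find a limit-periodic $V$ for which $E \mapsto L(E, V)$ has a discontinuity. After first perturbing $W$ within the periodic potentials at negligible cost to closeness, I may assume $\sigma(\Delta + W)$ has a nontrivial gap $(a,b)$, and I fix an energy $E_0 \in (a,b)$. Since $E_0$ lies in the resolvent set of the periodic operator, the transfer matrices grow exponentially, so $L(E_0, W) = c > 0$. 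The discontinuity will be manufactured at this single ``collapsed gap'' energy $E_0$.

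First I would build $V = \lim_{k\to\infty} V^k$ by an inductive procedure in the spirit of \cite{john}: starting from $V^0 = W$, at each stage add a periodic correction of rapidly increasing period $p_{k+1}$ and rapidly decreasing amplitude, small enough that $\sum_k \|V^{k+1} - V^k\|_\infty < \eps$, which guarantees both convergence in $\mathscr{L}$ and $\|V - W\|_\infty < \eps$. The corrections are to be designed so that at every stage the fixed energy $E_0$ remains in a spectral gap $(a_k, b_k)$ of $\sigma(\Delta + V^k)$, with $a_k \uparrow E_0$ and $b_k \downarrow E_0$, while the Lyapunov exponent stays uniformly bounded below, $L(E_0, V^k) \geq c' > 0$ for all $k$. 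The essential point is that the gap around $E_0$ is forced to shrink to the single point $\{E_0\}$ without its ``depth,'' as measured by the Lyapunov exponent at $E_0$, decaying to zero.

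Granting such a construction, I would conclude as follows. Since $a_k, b_k \in \sigma(\Delta + V^k)$ and $\sigma(\Delta + V^k) \to \sigma(\Delta + V)$ in the Hausdorff metric (the operators converge in norm), the limits $a_k, b_k \to E_0$ give $E_0 \in \sigma(\Delta + V)$: the gap has collapsed and $E_0$ now lies in the spectrum. On the other hand, $L(E, \cdot)$ is upper semicontinuous on $\mathscr{L}$, being an infimum over $N$ of functions of $V$ that are continuous in $\ell^\infty(\Z)$ (Fekete subadditivity applied to $\frac1N \int_{\Omega} \log\|A_N(E, V_\omega)\| d\mu$). Upper semicontinuity along $V^k \to V$ then yields $L(E_0, V) \geq \limsup_k L(E_0, V^k) \geq c' > 0$. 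Finally, the construction is arranged so that zero-Lyapunov spectrum accumulates at $E_0$ from the surviving bands adjacent to the collapsing gap, producing energies $E_n \in \sigma(\Delta + V)$ with $E_n \to E_0$ and $L(E_n, V) = 0$. Hence $\liminf_{E \to E_0} L(E, V) = 0 < L(E_0, V)$, so $E \mapsto L(E, V)$ is discontinuous at $E_0$.

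The hard part will be the inductive construction itself, i.e.\ the discrete analog of Johnson's argument: simultaneously forcing the width $b_k - a_k$ of the gap at $E_0$ to zero while keeping $L(E_0, V^k)$ bounded below by a fixed $c' > 0$. Generically a gap of small width carries only a small Lyapunov exponent, so the two demands pull against each other; Johnson's insight is that by accumulating infinitely many perturbation-induced gaps at $E_0$, their individual shrinking contributions can be made to sum to a fixed positive amount. The secondary difficulty is to guarantee that the bands surviving near $E_0$ carry vanishing Lyapunov exponent, so that the jump is genuine rather than an artifact; this requires controlling the zero set $\mathcal{Z}$ of the limit operator in a neighborhood of $E_0$, and it is the most delicate point, relying on the quantitative rate of the convergence $V^k \to V$.
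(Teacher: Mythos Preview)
Your overall plan is the paper's (and Johnson's): use density of periodic potentials in $\mathscr{L}$, then build $V=\lim V^k$ close to a given periodic one so that a gap collapses to a single energy $E_0$ with $L(E_0,V)>0$ while nearby band-energies carry arbitrarily small Lyapunov exponent. Two points need repair.

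The upper semicontinuity step is not justified as written. You assert that each $V\mapsto L_N(E_0,V)=\frac1N\int_{\Omega_V}\log\|A_N(E_0,V_\omega)\|\,d\mu_V(\omega)$ is continuous on $\mathscr{L}$, but both the hull $\Omega_V$ and its Haar measure $\mu_V$ change with $V$, so this is not simply ``an integrand depending continuously on a parameter''; the claim requires a real argument and is not the triviality you present it as. The paper sidesteps this via Thouless' formula \eqref{eq:thouless}. It first shifts so that $E_0=0$ sits at the \emph{bottom} of the spectrum rather than in an interior gap; then for every $E<0$ the function $t\mapsto\log|t-E|$ is bounded and continuous on the supports of all $N^k$, so weak convergence $N^k\to N$ of the IDS (which follows from $\|V^k-V\|_\infty\to0$) gives $L(E,V^k)\to L(E,V)$ outright, hence $L(E,V)\geq\gamma$ for $E<0$, and finally $L(0,V)=\lim_{E\uparrow0}L(E,V)$ again by Thouless. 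Collapsing the semi-infinite gap from one side is strictly simpler than collapsing an interior gap from both sides as you propose, precisely because it makes this limiting argument clean.

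Your picture of why $L(E_0,V^k)$ stays bounded below is also off. It is not that ``infinitely many shrinking contributions sum to a fixed positive amount.'' At each stage the perturbation $V^k-V^{k-1}$ is supported on a portion of the new period of relative length $O(1/m)$, so the one-period transfer matrix at $E_0$ factors as a fixed bounded matrix $\tilde A$ times $(A_{\tilde p}(E_0,V^{k-1}))^m$; taking $m$ large makes the contribution of $\tilde A$ to the normalized log-norm negligible, and one gets $L(E_0,V^k)$ as close to $L(E_0,V^{k-1})$ as desired (this is Lemma~\ref{lem:lowle}). The lower bound is thus \emph{preserved} stage by stage, losing at most $2^{-k}\gamma$ at step $k$, not reconstructed from accumulated small pieces.
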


Here the dense set cannot be improved to a generic set, since
Damanik and Gan in \cite{dg} show that there is a generic set of $V$
in $\mathscr{L}$ such that the Lyapunov exponent is
continuous.

Furthermore, we should point out that the proof of the previous
theorem largly parallels the construction of Johnson in \cite{john}
of similar examples in the continuum setting. However, the observation
of denseness is new.

\bigskip

Define the individual Lyapunov exponent by
\be\label{eq:defolLE}
 \ol{L}(E,\omega) = \limsup_{N\to\infty} \frac{1}{N} \log\|A_{N}(E, V_{\omega})\| .
\ee
An important aspect of our construction will be to
replace the Lyapunov exponent $L(E)$ defined in \eqref{eq:defLE}
by $\ol{L}(E, \omega)$. This is possible by the following result.

\begin{proposition}
 Assume that $T: \Omega\to\Omega$ is uniquely ergodic.
 For each $\omega\in \Omega$, there exists a set $\mathcal{E}_\omega$
 of zero Lebesgue measure such that for
 $E \in \R\setminus\mathcal{E}_{\omega}$
 \be
  L(E) = \ol{L}(E,\omega).
 \ee
\end{proposition}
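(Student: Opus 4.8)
The plan is to exploit the fact that, under unique ergodicity, the phase-averaged quantity $\frac1N\int_\Omega \log\|A_N(E,V_\omega)\|\,d\mu(\omega)$ and the individual quantity $\frac1N\log\|A_N(E,V_\omega)\|$ coincide in the limit for \emph{every} $\omega$, provided one is allowed to exclude a small ``bad'' set of energies. First I would recall the subadditive / Kingman-type structure: for fixed $E$, the functions $\omega\mapsto \log\|A_N(E,V_\omega)\|$ form a subadditive cocycle (up to the usual $A_{N+M}=A_M\circ A_N$ identity, which gives $\log\|A_{N+M}\|\le \log\|A_N\| + \log\|A_M(T^N\cdot)\|$), and since $T$ is uniquely ergodic and the entries of the transfer matrix depend continuously on $\omega$, one gets \emph{uniform} convergence in $\omega$: $\frac1N\log\|A_N(E,V_\omega)\|\to L(E)$ uniformly on $\Omega$. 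This is the discrete analogue of the classical result that a continuous subadditive cocycle over a uniquely ergodic system converges uniformly (Furman's theorem), the only subtlety being that $\log\|A_N\|$ need not be continuous where the matrix is singular, but $\|A_N\|\ge 1$ always (the matrices have determinant one) so $\log\|A_N\|\ge 0$ and the function is bounded above by $N\log(C+|E|)$; continuity in $\omega$ is genuine since each factor is continuous. So for each fixed $E$, convergence holds for all $\omega$, in particular $\ol L(E,\omega)=L(E)$ for all $\omega$.

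The issue is that the exceptional set in ``for all $\omega$'' a priori depends on $E$, and we want, for each fixed $\omega$, a single measure-zero set of energies $\mathcal E_\omega$ off of which equality holds. Here the plan is a Fubini argument. Consider the set $\mathcal B=\{(E,\omega): \ol L(E,\omega)\ne L(E)\}$ inside $\R\times\Omega$. By the previous paragraph, for each fixed $E$ the $\omega$-slice $\mathcal B_E=\{\omega:(E,\omega)\in\mathcal B\}$ is \emph{empty}; hence certainly $\mu(\mathcal B_E)=0$ for every $E$. If $\mathcal B$ is jointly measurable (with respect to Lebesgue $\times\,\mu$), Fubini then gives $(\mathrm{Leb}\times\mu)(\mathcal B)=0$, and so for $\mu$-almost every $\omega$ the $E$-slice $\mathcal B^\omega=\{E:(E,\omega)\in\mathcal B\}$ has zero Lebesgue measure. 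Setting $\mathcal E_\omega=\mathcal B^\omega$ finishes the proof for a.e.\ $\omega$. To upgrade to \emph{every} $\omega$, one notes that the $\limsup$ in $\ol L$ is monotone/continuous in $E$ off a small set in a way that propagates; more cleanly, for each $\omega$ the function $E\mapsto \ol L(E,\omega)$ is a limsup of functions subharmonic in $E$ (the transfer matrix is entire in $E$, so $\log\|A_N(E,V_\omega)\|$ is subharmonic), and subharmonicity plus the a.e.\ equality $\ol L(\cdot,\omega)=L(\cdot)$ forces equality quasi-everywhere once one knows $L$ itself is the (subharmonic) limit; I would phrase the exceptional set $\mathcal E_\omega$ as the set where a certain subharmonic limit fails, which is automatically of zero Lebesgue measure (indeed of zero capacity).

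Concretely the steps are: (1) establish that each factor matrix, hence $A_N(E,V_\omega)$, is continuous in $\omega$ and entire in $E$, and that $\|A_N\|\ge1$; (2) invoke unique ergodicity to get uniform-in-$\omega$ convergence $\frac1N\log\|A_N(E,V_\omega)\|\to L(E)$ for each fixed $E$, via the continuous subadditive ergodic theorem over uniquely ergodic systems; (3) conclude $\ol L(E,\omega)=L(E)$ for all $\omega$ and each fixed $E$, so the ``bad set'' in $\R\times\Omega$ has all $E$-fibers... wait, all $\omega$-fibers empty; (4) apply Fubini to get, for a.e.\ $\omega$, a zero-measure exceptional set in $E$; (5) use subharmonicity of $E\mapsto \frac1N\log\|A_N(E,V_\omega)\|$ together with Thouless-type / Craig--Simon upper semicontinuity arguments to remove the remaining ``a.e.\ $\omega$'' and obtain the statement for every $\omega$, identifying $\mathcal E_\omega$ with the polar set where the subharmonic functions $u_N(E)=\frac1N\log\|A_N(E,V_\omega)\|$ fail to converge to their upper envelope's regularization.

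The main obstacle is step (5): passing from ``almost every $\omega$'' to ``every $\omega$.'' The Fubini argument alone only yields a.e.\ $\omega$, and the naive slicing has the exceptional set depending on $E$. The clean route is to observe that $\limsup_N u_N(E)$ where each $u_N$ is subharmonic (and locally uniformly bounded above) differs from its upper-semicontinuous regularization only on a set of zero logarithmic capacity (hence zero Lebesgue measure), and that this regularization must equal $L(E)$ because $\int u_N\,d\mu\to L(E)$ for every $E$ and the $u_N$ are, by step (2), uniformly close to $L(E)$; the delicate point is justifying that the regularized $\limsup$ coincides with the pointwise-a.e.\ limit $L(E)$ for the specific $\omega$ at hand rather than merely a.e.\ $\omega$, which is exactly where the subharmonicity (rather than mere measurability) of the transfer-matrix cocycle in the energy variable is essential. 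I expect the write-up to lean on a standard lemma about convergence of subharmonic functions (e.g.\ from Koosis or Levin, or the potential-theoretic estimates already used in the Kotani-theory literature such as \cite{tjac}).
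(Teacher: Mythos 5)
The paper itself disposes of this proposition with a citation (Theorem~2.1 of \cite{k}), so any self-contained argument is welcome; but your proposal has a genuine gap at its foundation. The claim in your first paragraph --- that for a continuous subadditive cocycle over a uniquely ergodic system one gets \emph{two-sided} uniform convergence $\frac1N\log\|A_N(E,V_\omega)\|\to L(E)$ for every $\omega$ --- is false. Furman's theorem only gives the one-sided bound $\limsup_N\sup_\omega\bigl(\frac1N\log\|A_N(E,V_\omega)\|-L(E)\bigr)\le 0$; the lower bound can fail for exceptional $\omega$ (non-uniform cocycles exist over uniquely ergodic bases, and Schr\"odinger cocycles at energies in the spectrum with $L(E)>0$ are typical candidates). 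Indeed, if your claim were true the proposition would be trivial with $\mathcal{E}_\omega=\emptyset$, and the whole point of allowing an exceptional set of energies is that it is not. Consequently your Fubini step, which rests on the $\omega$-slices $\mathcal{B}_E$ being \emph{empty}, collapses. (A repaired Fubini argument using Kingman --- the slices have $\mu$-measure zero rather than being empty --- does work, but only yields the conclusion for $\mu$-almost every $\omega$, not for every $\omega$ as the proposition demands.)

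Your step (5) correctly identifies subharmonicity in $E$ as the tool for upgrading to every $\omega$, but as written it does not close the gap: the upper semicontinuous regularization of $\limsup_N u_N(E)$ agrees with $\limsup_N u_N(E)$ off a polar set, yet nothing you invoke forces that regularization to equal $L(E)$ for the \emph{fixed} $\omega$ at hand --- your justification again appeals to the (false) uniform two-sided convergence, and the identity $\int u_N\,d\mu\to L(E)$ is an average over $\Omega$, which says nothing about one particular orbit. The missing input is the lower bound $\ol{L}(E,\omega)\ge L(E)$ for quasi-every $E$, which one obtains by a different route: the $(1,1)$-entry of $A_N(E,V_\omega)$ is the characteristic polynomial $P_N(E,\omega)=\prod_j\bigl(E-E_j^{(N)}(\omega)\bigr)$ of the box restriction of $H_\omega$, so $\frac1N\log\|A_N\|\ge\int\log|E-t|\,d\nu_{N,\omega}(t)$ with $\nu_{N,\omega}$ the normalized eigenvalue counting measure; unique ergodicity gives $\nu_{N,\omega}\to dN$ weakly for \emph{every} $\omega$, and the upper envelope theorem for logarithmic potentials plus Thouless' formula \eqref{eq:thouless} then yield $\limsup_N\int\log|E-t|\,d\nu_{N,\omega}(t)=L(E)$ outside a set of zero capacity. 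Combined with Furman's uniform upper bound this gives the proposition. Without some such argument connecting the fixed orbit to the density of states, your sketch does not prove the statement.
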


\begin{proof}
 This is a consequence of Theorem~2.1. in \cite{k}.
\end{proof}

Thus, we obtain that for fixed $\omega$
\be\label{eq:lyapequal}
 L(E) = \lim_{N \to \infty} \frac{1}{N} \log\|A_{N}(E, V_{\omega})\|
\ee
for almost every $E$. We will furthermore need Thouless' formula
\be\label{eq:thouless}
 L(E) = \int \log|t - E| dN(t),
\ee
where $N(t)$ is the integrated density of states.
See for example Theorem~5.15 in Teschl's book \cite{tjac}.

%
%
%
%
%

\section{Proof of Theorem~\ref{thm:intA}}
In order to simplify the notation, we introduce

\begin{definition}\label{def:epsdense}
 A collection of open intervals $\Sigma$ is called $\eps$-dense in $[-4,4]$
 if for $E \in [-4,4]$, there exists $I \in \Sigma$ such that $I \subseteq [E - \eps, E + \eps]$.
\end{definition}

We will construct a sequence of periodic potentials $V^k$ with the
following properties:
\begin{enumerate}
 \item $V^k$ is $p_k$ periodic and $\|V^{k} - V^{k-1}\|_{\infty} \leq \frac{1}{2^{k-1}}$.
 \item $V^{k}(n) = V^{k-1}(n)$ for $0 \leq n \leq p_{k-1}$.
 \item For $1 \leq l \leq k$ and $E \in \sigma(\Delta + V^l)$
  \be
   \frac{1}{p_k} \log\|A_{p_k}(E, V^{k})\| \leq \sum_{s = l+1}^{k+1} \frac{1}{2^s}.
  \ee
 \item There is a set $\Sigma_k$ consisting of open intervals $I \subseteq \sigma(\Delta + V^k)$
   which is $2^{-k}$-dense in $[-4,4]$.
 \item For each $I_{k-1} \in \Sigma_{k-1}$ there exists $I_{k} \in \Sigma_k$
   such that $I_k \subseteq I_{k-1}$.
\end{enumerate}

We begin by showing that the existence of such $V^k$ implies
Theorem~\ref{thm:intA}. From (i), one obtains that the limit \be
 V(n) = \lim_{k\to\infty} V^{k}(n)
\ee
exists uniformly and is bounded.

\begin{lemma}
 Let $l \geq 1$, then for $E \in \sigma(\Delta + V^l)$
 \be
  \liminf_{N \to \infty} \frac{1}{N} \log\|A_N(E, V)\| \leq \frac{1}{2^l}.
 \ee
\end{lemma}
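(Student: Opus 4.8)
The plan is to use the quantitative estimate (iii) together with the uniform convergence $V^k \to V$ from (i) to control the transfer matrices $A_N(E,V)$ along the subsequence $N = p_k$. Fix $l \geq 1$ and $E \in \sigma(\Delta + V^l)$. For $k \geq l$, property (iii) gives
\be
 \frac{1}{p_k} \log\|A_{p_k}(E, V^{k})\| \leq \sum_{s=l+1}^{k+1} \frac{1}{2^s} \leq \frac{1}{2^l}.
\ee
So along the subsequence $p_k$ the normalized logarithmic growth of the \emph{periodic} transfer matrices is bounded by $2^{-l}$; the task is to transfer this bound to $V$ itself.

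The key step is a perturbation estimate comparing $A_{p_k}(E,V)$ with $A_{p_k}(E,V^k)$. Writing each as a product of $p_k$ single-step matrices differing only in the potential entry, and using that $\|V - V^k\|_\infty \leq \sum_{j \geq k+1} \|V^j - V^{j-1}\|_\infty \leq \sum_{j \geq k+1} 2^{-(j-1)} = 2^{-(k-1)}$ by (i), a telescoping/triangle-inequality argument gives something like
\be
 \|A_{p_k}(E,V)\| \leq \|A_{p_k}(E,V^k)\| \cdot \prod_{n=0}^{p_k - 1}\bigl(1 + \|V-V^k\|_\infty \cdot C(E,k)\bigr),
\ee
where $C(E,k)$ bounds the norm of the relevant partial products of single-step matrices. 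The cleanest way is to bound $\|A_{p_k}(E,V)\| \le \|A_{p_k}(E,V^k)\| + (\text{error})$ where the error is a sum of $p_k$ terms, each a product of matrices of norm at most, say, $M := 2(|E|+\|V\|_\infty + 1)$, so the error is at most $p_k \|V - V^k\|_\infty M^{p_k} \le p_k 2^{-(k-1)} M^{p_k}$. Then $\frac{1}{p_k}\log\|A_{p_k}(E,V)\| \le \frac{1}{p_k}\log\bigl(\|A_{p_k}(E,V^k)\| + p_k 2^{-(k-1)}M^{p_k}\bigr)$.

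The main obstacle is that this crude bound has $M^{p_k}$ in it, and $p_k$ grows, so $\frac{1}{p_k}\log(p_k 2^{-(k-1)} M^{p_k}) \to \log M$, which is useless. The resolution must be that the construction of $V^k$ (to be carried out) chooses $p_k$ relative to the convergence rate so that $\|V - V^k\|_\infty$ beats $M^{p_k}$: e.g. one wants $2^{-(k-1)} M^{p_k} \le 1$, i.e. the convergence in (i) should really be taken fast enough relative to $p_k$ (replacing $2^{-(k-1)}$ by something like $M^{-2 p_k}$ in the actual construction, which is consistent with (i) since we are free to make the bound smaller). Assuming such a choice, $\frac{1}{p_k}\log\|A_{p_k}(E,V)\| \le \frac{1}{p_k}\log(2\|A_{p_k}(E,V^k)\|) \le \frac{1}{2^l} + \frac{\log 2}{p_k}$, and letting $k \to \infty$ through this subsequence yields
\be
 \liminf_{N\to\infty} \frac{1}{N}\log\|A_N(E,V)\| \le \lim_{k\to\infty}\Bigl(\frac{1}{2^l} + \frac{\log 2}{p_k}\Bigr) = \frac{1}{2^l},
\ee
which is the claim. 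So the real content of the lemma is the perturbative comparison plus the observation that the $\liminf$ only needs the subsequence $p_k$; the delicate point — reconciling the exponential perturbation blow-up with the prescribed convergence rate — is deferred to the explicit construction of the $V^k$, where one has the freedom to demand faster convergence than literally stated in (i), or alternatively to invoke a more careful continuity estimate for transfer matrices over one period of a periodic potential whose spectrum contains $E$.
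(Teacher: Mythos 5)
There is a genuine gap, and it stems from overlooking property (ii). You only use the $\ell^\infty$ bound from (i) to compare $A_{p_k}(E,V)$ with $A_{p_k}(E,V^k)$, which forces you into a perturbative estimate with the $M^{p_k}$ blow-up that you yourself identify as fatal. But property (ii) states that $V^{k}(n)=V^{k-1}(n)$ for $0\le n\le p_{k-1}$; iterating this, $V^{j}(n)=V^{k}(n)$ for all $j\ge k$ and $0\le n\le p_k$, hence $V(n)=V^{k}(n)$ on $\{0,\dots,p_k\}$. Since $A_{p_k}(E,\cdot)$ depends only on the potential values at sites $0,\dots,p_k-1$, one has the \emph{exact} identity $A_{p_k}(E,V)=A_{p_k}(E,V^{k})$, and the lemma follows immediately from (iii) together with $\sum_{s=l+1}^{k+1}2^{-s}\le 2^{-l}$ along the subsequence $N=p_k$. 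This is the paper's entire proof; no continuity of transfer matrices in the potential is needed.

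Your proposed repair --- demanding that $\|V-V^k\|_\infty$ decay like $M^{-2p_k}$ --- is not merely an inelegance that gets cleaned up later: it would conflict with the rest of the construction. The perturbations $V^{k}-V^{k-1}$ are shifts by $j/2^{k+1}$ with $-4\le j\le 3$, and their size is dictated by the requirement that $\Sigma_k$ be $2^{-k}$-dense in $[-4,4]$ (property (iv)); with a fixed number of shifts per step the perturbation sizes can only decay geometrically in $k$, never superexponentially in $p_k$. So deferring the difficulty to ``a faster convergence rate in the construction'' does not close the gap. The correct reading of the hypotheses is that (i) controls $\|V-V^k\|_\infty$ globally (ensuring $V$ exists and is limit-periodic), while (ii) gives exact agreement on an initial block of length $p_k$, which is precisely what makes the transfer-matrix comparison trivial at scale $p_k$.
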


\begin{proof}
 By (ii), we have $\frac{1}{p_k} \log\|A_{p_k}(E, V)\| = \frac{1}{p_k} \log\|A_{p_k}(E, V^k)\|$.
 By (iii), we obtain that for $E \in \sigma(\Delta + V^l)$ and $k \ge l$
 $$
  \frac{1}{p_k} \log\|A_{p_k}(E, V)\| \leq \sum_{s = l+1}^{k+1} \frac{1}{2^s} \leq \frac{1}{2^l},
 $$
 which implies the claim.
\end{proof}

Combining this lemma with \eqref{eq:lyapequal}, we obtain that
for almost every $E \in \sigma(\Delta + V^l)$
\be\label{eq:LEleq2l}
 L(E)\leq \frac{1}{2^l}.
\ee
We also have

\begin{proposition}
For $E_0 \in [-4,4]$ we have
 \be\label{eq:liminfle}
  \liminf_{E \to E_0} L(E) = 0.
 \ee
\end{proposition}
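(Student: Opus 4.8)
The plan is to show that near any $E_0 \in [-4,4]$ the Lyapunov exponent is forced to be arbitrarily small at points of the spectra $\sigma(\Delta+V^l)$, and that such points accumulate at $E_0$. First I would fix $E_0 \in [-4,4]$ and $\eps > 0$, and use property (iv): for $l$ large enough that $2^{-l} < \eps$, the collection $\Sigma_l$ is $2^{-l}$-dense in $[-4,4]$, so there is an open interval $I \in \Sigma_l$ with $I \subseteq [E_0 - \eps, E_0 + \eps]$ and $I \subseteq \sigma(\Delta + V^l)$. In particular $I$ is a nonempty open set contained in $\sigma(\Delta+V^l)$, hence $|I| > 0$.

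Next I would invoke \eqref{eq:LEleq2l}: for almost every $E \in \sigma(\Delta+V^l)$ one has $L(E) \le 2^{-l}$. Since $I \subseteq \sigma(\Delta+V^l)$ has positive Lebesgue measure, there exists at least one $E^* \in I$ (in fact a.e.\ $E^* \in I$) with $L(E^*) \le 2^{-l} < \eps$. Because $L(E) \ge 0$ always, this gives a point $E^* \in (E_0 - \eps, E_0 + \eps)$ with $0 \le L(E^*) < \eps$. Letting $\eps \to 0$ produces a sequence $E_n \to E_0$ with $L(E_n) \to 0$, which shows $\liminf_{E \to E_0} L(E) = 0$; the inequality $\liminf_{E\to E_0} L(E) \ge 0$ is trivial from nonnegativity of $L$.

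The only subtlety — and the step I would be most careful about — is the interplay between the "almost every $E$" in \eqref{eq:LEleq2l} and the need to land inside the specific interval $I$. This is harmless precisely because $I$ has positive measure, so the full-measure set on which $L \le 2^{-l}$ must meet $I$; one should state this explicitly rather than gloss over it. (One could also avoid the issue entirely by noting that $L$ is upper semicontinuous, being an infimum over $N$ of the continuous functions $\frac{1}{N}\int \log\|A_N\|$, so the bound $L \le 2^{-l}$ a.e.\ on $\sigma(\Delta + V^l)$ would propagate to all of $I$ — but the measure-theoretic argument suffices and needs nothing beyond what is already in the excerpt.) No other obstacle arises: properties (iv) and (i)–(iii) have already done the real work, and this proposition is essentially the bookkeeping that turns "small on a dense-in-measure family of intervals" into a pointwise $\liminf$ statement.
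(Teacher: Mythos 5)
Your argument is correct, and it uses the same two ingredients as the paper, namely property (iv) and the almost-everywhere bound \eqref{eq:LEleq2l}, but it is more direct. The paper's proof first invokes property (v) to produce a nested chain $I_k \supseteq I_{k+1} \supseteq \cdots$ of intervals and a point $\hat{E} \in \bigcap_{l \geq k} \ol{I_l}$ within $2^{-k}$ of $E_0$, and then extracts energies $E_l \in I_l$ with $L(E_l) \leq 2^{-l}$ accumulating at $\hat{E}$; your version skips the nesting entirely and simply picks, for each $\eps > 0$, a single interval $I \in \Sigma_l$ (with $2^{-l} < \eps$) contained in $[E_0 - \eps, E_0 + \eps]$ and a point of $I$ where the almost-everywhere bound holds. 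So property (v) is not actually needed for this proposition (it is used elsewhere in the paper's bookkeeping), and your handling of the measure-theoretic point --- that the full-measure set on which $L \leq 2^{-l}$ must meet the positive-measure interval $I$ --- is exactly the step the paper glosses over with ``we can choose $E_l \in I_l$.'' Two trivial remarks: the intervals in $\Sigma_l$ are nonempty by the construction in Lemma~\ref{lem:subinterval} (the bare Definition~\ref{def:epsdense} would not rule out empty intervals), and for the punctured $\liminf$ one should take $E^* \neq E_0$, which is free since almost every point of $I$ works.
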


\begin{proof}
 Pick any $E_0 \in [-4,4]$ and $k \geq 1$.
 By (iv), we can choose $I_k \in \Sigma_k$ such that $I_k \subseteq [E_0 - \frac{1}{2^k}, E_0 + \frac{1}{2^k}]$.
 By (v), we can choose a sequence of intervals
 $$
  I_k \supseteq I_{k+1} \supseteq \dots
 $$
 Let $\hat{E} \in \bigcap_{l\geq k} \ol{I_l}$. Since $\hat{E} \in \ol{I_k}$,
 we have  $|E_0 - \hat{E}| < \frac{1}{2^k}$.
 By \eqref{eq:LEleq2l}, we can choose $E_l \in I_l$ with $|E_l - \hat{E}| \leq \frac{1}{2^l}$
 and $L(E_l) \leq \frac{1}{2^l}$.
 Hence
 $$
  0 \leq \liminf_{E \to \hat{E}} L(E) \leq \lim_{l \to \infty} L(E_l) = 0,
 $$
 showing \eqref{eq:liminfle}. Since $k \geq 1$ was arbitary, the claim follows.
\end{proof}

We now come to

\begin{proof}[Proof of Theorem~\ref{thm:intA}]
 Let 
 $$
  L_N(E) = \frac{1}{2^N} \int_{\Omega} \log\| A_{2^N}(E,V_\omega) \| d\mu(\omega).
 $$
 $L_N(E)$ is continuous in $E$ and decreasing in $N$.
 In particular, this implies
 $$
  L(E) = \inf_{N \geq 1} L_N(E).
 $$
 Introduce 
 $$
  U_l = \bigcup_{N\geq 1} \left\{E:\quad L_N(E) < \frac{1}{l} \right\}.
 $$
 Since $L_N(E)$ is continuous, $U_l$ is open.
 Let us now show that $U_l$ is also dense. Let $E_0 \in [-4,4]$ and 
 $\eps > 0$. By the previous proposition, there exists 
 $E \in [E_0-\eps, E_0+\eps]$ such that $L(E) < \frac{1}{2 l}$.
 Furthermore, there must be an $N_0 \geq 1$ such that
 $|L(E) - L_{N_0}(E)| < \frac{1}{2 l}$ and thus that
 $L(E) < \frac{1}{l}$. This implies $E \in U_l$ and thus
 that $U_l$ is dense.

 Introduce 
 $$
  U = \bigcap_{l \geq 1} U_l.
 $$
 Since each of the $U_l$ is open and dense, we have by
 the Baire category theorem that also $U$ is dense.
 The claim follows.
\end{proof}

In the following subsections, we will explain how to construct
$V^k$ given $V^{k-1}$. In the next subsection, we will construct
a sequence of potentials $\hat{V}_m$  such that properties
(i), (ii), (iv), and (v) hold. Then, we will show in Subsection~\ref{sub:ensuringiii}
that (iii) holds if $m$ is chosen large enough.

\subsection{A sequence of potentials such that (i), (ii), (iv) and (v)
hold}
We will need the following lemma, describing the generalized eigenfunctions
of a periodic operator. A proof can be given using that $A_p(E, V)$ has
an eigenvalue of modulus $1$ for $E \in \sigma(\Delta + V)$ and we omit
it for brevity.

\begin{lemma} \label{lem:perisolu}
 Let $V$ be a $p$ periodic potential and $E \in \sigma(\Delta + V)$.
 There is a bounded solution $\psi$ of $(\Delta + V)\psi = E \psi$ such that
 for every $m \in \Z$
 \be \label{eq:perisolu}
 \sum^{p}_{k=1} |\psi(m + k)|^2 = 1.
 \ee
\end{lemma}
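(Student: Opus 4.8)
The plan is to exploit the Floquet--Bloch structure of the periodic operator $\Delta + V$. Since $V$ is $p$-periodic, the monodromy matrix $A_p(E,V)$ governs the solutions: a solution $u$ of $(\Delta+V)u = Eu$ determines and is determined by the vector $\binom{u(1)}{u(0)}$, and $\binom{u(n+p)}{u(n+p-1)} = A_p(E,V)\binom{u(n)}{u(n-1)}$ for all $n$ (using $\det A_p = 1$ and periodicity of the one-step transfer matrices). For $E \in \sigma(\Delta + V)$ one has $|\tr A_p(E,V)| \le 2$, so the eigenvalues of $A_p(E,V)$ lie on the unit circle; write them as $\E^{\pm\I\theta}$. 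First I would pick a (possibly complex) eigenvector $v$ of $A_p(E,V)$ with eigenvalue $\E^{\I\theta}$ and let $\psi$ be the solution with $\binom{\psi(1)}{\psi(0)} = v$. Then $\binom{\psi(n+p)}{\psi(n+p-1)} = \E^{\I\theta}\binom{\psi(n)}{\psi(n-1)}$, so $|\psi(n+p)| = |\psi(n)|$ for all $n$; in particular $\psi$ is bounded, and the quantity $S(m) = \sum_{k=1}^{p} |\psi(m+k)|^2$ is $p$-periodic in $m$. If $S(m)$ is not identically zero (which it is not, since $v \ne 0$ forces $\psi$ not to vanish identically), we rescale $\psi$ so that... but wait, that only makes $S$ equal to $1$ at the points where it is already constant.

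The key remaining point is that $S(m)$ is in fact constant in $m$, not merely $p$-periodic. For this I would use that $A_p(E,V)$ — and more generally the $k$-step transfer matrices — are in $\mathrm{SL}(2,\R)$, hence area-preserving. Concretely, the shifted monodromy matrix $A_p(E, V(\cdot + m))$ is conjugate to $A_p(E,V)$ (the conjugating matrix being a product of single-step matrices, again in $\mathrm{SL}(2,\R)$), so it too has eigenvalue $\E^{\I\theta}$, and the vector $\binom{\psi(m+1)}{\psi(m)}$ is a corresponding eigenvector. The cleanest route is to observe that $\overline{\psi}$ is a solution with eigenvalue $\E^{-\I\theta}$, and the Wronskian $W = \psi(n)\overline{\psi}(n-1) - \psi(n-1)\overline{\psi}(n)$ is independent of $n$ (a standard constancy-of-Wronskian computation from the recursion). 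One computes $\im\bigl(\psi(n)\overline{\psi(n-1)}\bigr) = \tfrac{1}{2\I}W$ is constant, and then a short summation-by-parts / telescoping argument using the eigenvalue relation over one period shows $S(m+1) - S(m)$ is a fixed multiple of $\im(W)\sin\theta$ or vanishes outright; in any case $S(m)$ is constant. Granting constancy of $S$, normalize $\psi \mapsto \psi/\sqrt{S}$ (noting $S > 0$ since $\psi \not\equiv 0$ and $S$ is constant) to get \eqref{eq:perisolu}.

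The main obstacle is precisely establishing that $S(m)$ is independent of $m$ rather than just periodic — this is where the $\mathrm{SL}(2,\R)$ (area-preserving) structure of the transfer matrices must be brought in, and a naive argument using only $p$-periodicity of $V$ is insufficient. One clean way to finesse this: at a point $E$ in the interior of a spectral band the two eigenvalues $\E^{\pm\I\theta}$ are distinct and one gets two linearly independent Floquet solutions $\psi_\pm$; the reality of the equation forces $\psi_- = \overline{\psi_+}$ up to a constant, and then $|\psi_+(n)|^2 = \psi_+(n)\overline{\psi_+(n)} = \psi_+(n)\psi_-(n)$ is, by the constancy of the Wronskian of $\psi_+$ and $\psi_-$ together with the recursion, seen to have $p$-periodic partial sums that are actually constant; band edges ($\theta \in \{0,\pi\}$) are handled by taking a limit $E \to$ (band edge) within the band, using continuity of transfer matrices in $E$ and compactness to extract a limiting solution still satisfying \eqref{eq:perisolu}. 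The authors say they omit this for brevity, and I would do the same in the final writeup, merely indicating the $|\tr A_p| \le 2$ starting point and the Wronskian constancy as the two ingredients.
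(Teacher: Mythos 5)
Your overall route is the one the paper has in mind (it only hints at the proof: use that $A_p(E,V)$ has an eigenvalue of modulus one), and the skeleton -- pick a Floquet solution $\psi$ with $\binom{\psi(n+p)}{\psi(n+p-1)} = \E^{\I\theta}\binom{\psi(n)}{\psi(n-1)}$, note $|\psi(n+p)|=|\psi(n)|$, and normalize -- is correct. However, the step you single out as ``the main obstacle'' is not an obstacle at all, and the Wronskian/$\mathrm{SL}(2,\R)$ detour you build around it is where your writeup stops being a proof: the assertion that $S(m+1)-S(m)$ ``is a fixed multiple of $\im(W)\sin\theta$ or vanishes outright'' is never derived, so as written the constancy of $S$ is not established by your argument. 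In fact it follows in one line from what you already have: the windowed sums telescope,
\[
 S(m+1)-S(m) \;=\; |\psi(m+1+p)|^2 - |\psi(m+1)|^2 \;=\; 0,
\]
precisely because $|\psi(n+p)|=|\psi(n)|$ for every $n$. So $S$ is constant, it is strictly positive (some $|\psi(n)|>0$ since the eigenvector $v\neq 0$ forces $\psi\not\equiv 0$, and every $n$ lies in some window), and $\psi/\sqrt{S}$ does the job. Your statement that ``a naive argument using only $p$-periodicity of $V$ is insufficient'' is therefore wrong. The band-edge limiting argument at the end is also unnecessary: when $|\tr A_p(E,V)|=2$ the matrix may be a nontrivial Jordan block, but it still possesses a genuine eigenvector with eigenvalue $\pm 1$, and the same construction applies verbatim. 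With the telescoping observation inserted in place of the Wronskian discussion, your proof is complete and matches the paper's intended (omitted) argument.
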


Let $I^{1}, \dots, I^{L}$ be an enumeration of the intervals in
$\Sigma_{k-1}$. For each $I^{l}$, we choose a subinterval
$\ti{I}^{l}$ of length $< \frac{1}{2^{k+1}}$. Denote by
$\widetilde{\Sigma}_{k-1}$ the collection of intervals $\ti{I}^l$.
Introduce $\delta$ by \be
 \delta = \min_{I \in \widetilde{\Sigma}} |I|.
\ee
Clearly, $0 < \delta < \frac{1}{2^{k+1}}$.

Choose $m_0$ so large that $\delta \sqrt{ m_0} > 4$ and let
$\ti{p}_{k-1} = m_0 p_{k-1}$. We will treat $V^{k-1}$ as a
$\ti{p}_{k-1}$ periodic potential. Write
\be
 \Lambda_{j,m} = [m \ti{p}_{k-1} + (j + 4) \ti{p}_{k-1}, m \ti{p}_{k-1} +(j + 5) \ti{p}_{k-1} - 1].
\ee
Introduce the potentials $\hat{V}_m$ by
\be
 \hat{V}_m(n) = \begin{cases} V^{k-1}(n), & 0 \leq n \leq m \ti{p}_{k-1} - 1 ; \\
  V^{k-1}(n) + \frac{j}{2^{k+1}}, & n \in \Lambda_{j,m}\quad -4\leq j \leq 3.
 \end{cases}
\ee
Note that $\hat{V}_m$ will be $\hat{p}_m = m \ti{p}_{k-1} + 8 \ti{p}_{k-1}$ periodic.
We will later let $V^{k} = \hat{V}_m$ for some large $m$.

We see that the claimed properties (i) and (ii) are straightforward.
It remains to prove (iv) and (v).

\begin{lemma} \label{lem:subinterval}
 For each $I \in \widetilde{\Sigma}_{k-1}$ and $-4\leq j\leq 3$,
 there exists an open interval $J$ such that
 \be\label{eq:propJ}
  J \subseteq I + \frac{j}{2^{k+1}} \text{ and }
  J \subseteq \sigma(\Delta + \hat{V}_m).
 \ee
\end{lemma}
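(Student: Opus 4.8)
The goal is to show that the modified potential $\hat V_m$ has spectrum covering (a subinterval of) each translate $I + \frac{j}{2^{k+1}}$, where $I \in \widetilde\Sigma_{k-1}$ and $-4 \le j \le 3$. The underlying mechanism is transparent: on the block $\Lambda_{j,m}$, the potential $\hat V_m$ is exactly $V^{k-1}(\cdot) + \frac{j}{2^{k+1}}$, i.e.\ a long stretch (of length $\tilde p_{k-1} = m_0 p_{k-1}$, a large multiple of the period) of the periodic potential $V^{k-1}$ shifted by the constant $\frac{j}{2^{k+1}}$. For energies $E \in I + \frac{j}{2^{k+1}}$ we have $E - \frac{j}{2^{k+1}} \in I \subseteq \sigma(\Delta + V^{k-1})$, so the shifted periodic operator $\Delta + V^{k-1} + \frac{j}{2^{k+1}}$ has $E$ in its spectrum, and hence there is a bounded (polynomially bounded, here genuinely bounded by periodicity) generalized eigenfunction. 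The strategy is to use a piece of this eigenfunction, cut off to the block $\Lambda_{j,m}$, as an approximate eigenfunction (Weyl sequence) for $\Delta + \hat V_m$ (acting on $\ell^2(\Z)$, or on the finite periodic interval — both give the same spectrum for a periodic operator).

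Concretely: first I would fix $I$ and $j$, and shrink to a slightly smaller closed subinterval $J_0 \subseteq I + \frac{j}{2^{k+1}}$ on whose preimage $J_0 - \frac{j}{2^{k+1}} \subseteq I$ the periodic operator $\Delta + V^{k-1}$ has spectrum. For each $E \in J_0$, invoke Lemma~\ref{lem:perisolu} to get a bounded solution $\psi_E$ of $(\Delta + V^{k-1} + \frac{j}{2^{k+1}})\psi_E = E\psi_E$ normalized so that $\sum_{i=1}^{p_{k-1}}|\psi_E(\cdot + i)|^2 = 1$ on every window of length $p_{k-1}$; summing over the $m_0$ consecutive windows inside $\Lambda_{j,m}$ gives $\|\psi_E\chi_{\Lambda_{j,m}}\|^2 = m_0$, so the normalized restriction has $\ell^2$-norm of order $\sqrt{m_0}$. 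Next, let $\varphi_E = \psi_E \chi_{\Lambda_{j,m}}$ and estimate $\|(\Delta + \hat V_m - E)\varphi_E\|$: since $\hat V_m$ agrees with $V^{k-1} + \frac{j}{2^{k+1}}$ on $\Lambda_{j,m}$, the commutator with $\chi_{\Lambda_{j,m}}$ is supported only at the two endpoints of the block, so $(\Delta + \hat V_m - E)\varphi_E = \psi_E(\text{endpoint corrections})$, which has $\ell^2$-norm bounded by a constant (at most $2\sup|\psi_E| \le 2$, using the normalization). Therefore
\[
 \frac{\|(\Delta + \hat V_m - E)\varphi_E\|}{\|\varphi_E\|} \le \frac{C}{\sqrt{m_0}},
\]
so $\dist(E, \sigma(\Delta + \hat V_m)) \le C/\sqrt{m_0}$ for every $E \in J_0$.

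To upgrade this from an $O(1/\sqrt{m_0})$-neighborhood statement to actual inclusion of an interval, I would use the constraint $\delta\sqrt{m_0} > 4$ built into the choice of $m_0$: since $|I| < \delta^{-1}$... more precisely, the lengths $|I|$ are at least $\delta$, choose inside $J_0$ an open interval $J$ whose distance to $\partial J_0$ exceeds $C/\sqrt{m_0}$; as $|I| \ge \delta > 4/\sqrt{m_0} \ge 2C/\sqrt{m_0}$ for $m_0$ large, such $J$ with $|J|>0$ exists, and then the Weyl estimate at the endpoints of $J_0$ forces those endpoints (hence all of $J$, by connectedness of the approximate-spectrum band) to lie in $\sigma(\Delta + \hat V_m)$; one cleanly concludes $J \subseteq \sigma(\Delta + \hat V_m)$. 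The main obstacle — and the only place care is needed — is the last step: an $\eps$-neighborhood bound $\dist(E,\sigma) \le \eps$ does \emph{not} by itself put $E$ in the spectrum, so I must either (a) use that the spectrum of the periodic operator $\Delta + \hat V_m$ is a union of bands and a continuity/perturbation argument in $E$ to show a whole band lies near $J_0$ and sweeps across $J$, or (b) replace the single Weyl vector by a two-dimensional family (using both Floquet solutions, or $\psi_E$ together with its $E$-derivative) to directly produce, via a rank/degree argument, a genuine eigenvalue of the finite periodic problem in each small subinterval. Approach (b) is the more robust route and is what I would write up; the constant $C$ is absolute (coming only from the normalization in Lemma~\ref{lem:perisolu} and the two endpoint terms of $\Delta$), so the condition $\delta\sqrt{m_0}>4$ comfortably absorbs it.
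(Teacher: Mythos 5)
Your proposal is correct and follows essentially the same route as the paper: truncate the normalized Floquet solution of $\Delta + V^{k-1}$ at the midpoint energy of $I$ to the block $\Lambda_{j,m}$, note that the error is supported at the two block endpoints with norm at most $2$, and use $\delta\sqrt{m_0} > 4$ to conclude $\dist\bigl(\hat E + \tfrac{j}{2^{k+1}}, \sigma(\Delta + \hat V_m)\bigr) < \delta/2$. For the final step the paper takes exactly your option (a), but more simply than you sketch it: since the test energy is the midpoint of an interval of length at least $\delta$, the $\delta/2$-close spectral point automatically lies inside $I + \tfrac{j}{2^{k+1}}$, and the nondegeneracy of the bands of the periodic operator $\Delta + \hat V_m$ immediately yields an open subinterval $J$ of spectrum there --- no sweep over a subinterval and no two-dimensional Weyl family is needed.
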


\begin{proof}
 Let $I = (E_-,E_+)$ and $\hat{E} = \frac{E_- + E_+}{2}$.
 By Lemma~\ref{lem:perisolu}, there exists a function $\psi$
 such that
 $$
  (\Delta + V^{k-1}) \psi = \hat{E} \psi,\quad \text{and}\quad
  \sum^{p_{k-1}}_{n = 1} |\psi(n)|^2 = 1.
 $$
 Let $\varphi$ be the restriction of $\psi$ to $\Lambda_{j,m}$.
 A computation shows $\|\varphi\| = \sqrt{m_0}$ and
 $$
  (\Delta + \hat{V}_{m} - \hat{E}  - \frac{j}{2^{k+1}}) \varphi(n) =
  \begin{cases} \psi(m \ti{p}_{k-1} + (j + 4) \ti{p}_{k-1}), & n = m \ti{p}_{k-1} + (j + 4) \ti{p}_{k-1} - 1;\\
  - \psi(m \ti{p}_{k-1} + (j + 4) \ti{p}_{k-1} - 1), & n = m \ti{p}_{k-1} + (j + 4) \ti{p}_{k-1}; \\
  - \psi(m \ti{p}_{k-1} + (j + 5) \ti{p}_{k-1} ), & n = m \ti{p}_{k-1} + (j + 5) \ti{p}_{k-1} -1;\\
  \psi(m \ti{p}_{k-1} + (j + 5) \ti{p}_{k-1} - 1), & n = m \ti{p}_{k-1} + (j + 5) \ti{p}_{k-1};\\
  0, & \text{otherwise}.  \end{cases}
 $$
 So we have
 \be
  \frac{\|(\Delta + \hat{V}_m - \hat{E} - \frac{j}{2^{k+1}}) \varphi \|}{\| \varphi \|} \leq \frac{2}{\sqrt{m_0}} < \frac{\delta}{2}.
 \ee
 The above inequality implies that
 $$
 \dist(\hat{E} + \frac{j}{2^{k+1}},\sigma(\Delta + \hat{V}_m)) < \frac{\delta}{2}.
 $$
 Hence, we see that $\sigma(\Delta  + \hat{V}_m) \cap (I + \frac{j}{2^{k+1}})$
 is non empty since $|I + \frac{j}{2^{k+1}}| \geq \delta$.
 Since $\sigma(\Delta + \hat{V}_m)$ consists of bands,
 we may choose an open interval $J$ such that \eqref{eq:propJ} holds.
\end{proof}

We denote by $\Sigma_k$ the collection of open
intervals $J$ obtained from the previous lemma for all possible choices
of $I$ and $j$. It is clear that property (v) holds.

\begin{proof}[Proof of Property (iv)]
 Given any $E \in [-4,4]$, we may find an $I
 \in \widetilde{\Sigma}_{k-1}$ such that
 $$
  I \subseteq [E - \frac{1}{2^{k-1}}, E + \frac{1}{2^{k-1}}].
 $$
 Let $I = (E_-,E_+)$ and $\hat{E} =\frac{E_- + E_+}{2}$.
 Since $\hat{E} \in I$ and
 $$
  [E - \frac{1}{2^{k-1}}, E + \frac{1}{2^{k-1}}] = \bigcup_{j=-4}^{3} \left[E + \frac{j}{2^{k+1}}, E + \frac{j+1}{2^{k+1}}\right],
 $$
 we can find $-4 \leq j \leq 3$ such that
 $$
  \hat{E} \in \left[E - \frac{j}{2^{k+1}}, E - \frac{j+1}{2^{k+1}}\right]
 $$
 By the construction of $\Sigma_{k}$ in Lemma~\ref{lem:subinterval},
 there exists $J \in \Sigma_{k}$  such that $J \subseteq I + \frac{j}{2^{k+1}}$.
 By $|I| \leq \frac{1}{2^{k+1}}$, we obtain
 $$
  J \subseteq I + \frac{j}{2^{k+1}} \subseteq
  \left[\hat{E} + \frac{j-1}{2^{k+1}}, \hat{E} + \frac{j+1}{2^{k+1}}\right]
  \subseteq \left[E - \frac{1}{2^{k}}, E + \frac{1}{2^k}\right],
 $$
 which finishes the proof.
\end{proof}

\subsection{Ensuring (iii)}\label{sub:ensuringiii}

\begin{lemma}\label{lem:transmatsmall}
 Let $V$ be a $p$ periodic potential.
 For any $\mu > 0$, there exists $M \geq 1$ such that
 for $E \in \sigma(\Delta + V)$ and $N \geq M$, we have
 \be
  \frac{1}{N} \log\|A_{N}(E, V)\| \leq \mu.
 \ee
\end{lemma}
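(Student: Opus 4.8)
The plan is to prove Lemma~\ref{lem:transmatsmall} by exploiting the two facts recorded earlier about periodic potentials: that $L(E,V)$ is continuous in $E$ and vanishes on $\sigma(\Delta+V)$, combined with the Fekete-type subadditivity that makes $\frac{1}{N}\log\|A_N\|$ approach $L$ from above. The subtlety is that the convergence $\frac{1}{N}\log\|A_N(E,V)\|\to L(E,V)$ is not uniform in $E$ a priori, so one cannot simply fix $E$ and push $N$ large. Instead I would first reduce to the periods: since $A_{p}(E,V)$ is a fixed matrix with $\|A_p(E,V)\|\ge 1$ and eigenvalue of modulus $1$ on the spectrum, for $E\in\sigma(\Delta+V)$ the eigenvalues of $A_p(E,V)$ lie on the unit circle, hence $\rho(A_p(E,V))=1$ and therefore $\frac{1}{np}\log\|A_p(E,V)^n\|\to 0$ as $n\to\infty$ by the spectral radius formula — indeed $\|A_p^n\|\le C_E\, n$ since the matrix is $2\times 2$ with unimodular eigenvalues (at worst a nontrivial Jordan block gives linear growth).

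\emph{Second step: make this uniform in $E$.} The quantity $\|A_p(E,V)\|$ is a continuous function of $E$ on the compact set $\sigma(\Delta+V)$, hence bounded by some constant $C=C(V)$. More importantly, write any $N$ as $N=np+r$ with $0\le r<p$; then
\be
 \|A_N(E,V)\| \le \|A_r(E,V)\|\cdot\|A_p(E,V)^n\| \le C'\,\|A_p(E,V)^n\|,
\ee
where $C'=\max_{0\le r<p}\sup_{E\in\sigma(\Delta+V)}\|A_r(E,V)\|<\infty$. Thus it suffices to bound $\frac{1}{N}\log\|A_p(E,V)^n\|$ uniformly for $E\in\sigma(\Delta+V)$. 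For this I would use the general linear-algebra fact that for a $2\times 2$ matrix $B$ with $\|B\|\le C$, $\det B=1$, and spectral radius $1$, one has $\|B^n\|\le (1+C)^2 n$ for all $n\ge 1$ (this follows from the $2\times 2$ Cayley--Hamilton identity $B^n = p_n(\tr B)\,B - p_{n-1}(\tr B)\,I$ with $|\tr B|\le 2$, so the Chebyshev-type coefficients $p_n$ are bounded linearly in $n$). Applying this with $B=A_p(E,V)$, uniformly over $E\in\sigma(\Delta+V)$, gives
\be
 \frac{1}{N}\log\|A_N(E,V)\| \le \frac{1}{N}\log\big(C'(1+C)^2 n\big) \le \frac{\log\big(C''(N/p)\big)}{N},
\ee
and the right-hand side tends to $0$ as $N\to\infty$ \emph{independently of $E$}. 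Hence given $\mu>0$ we may pick $M$ so large that $\frac{\log(C'' N/p)}{N}\le\mu$ for all $N\ge M$, which is exactly the claim.

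\emph{Which step is the main obstacle.} The only real content is the uniform bound $\|A_p(E,V)^n\|\lesssim n$ over the spectrum; everything else is bookkeeping with the division $N=np+r$. The danger is that on the boundary of bands, where $A_p(E,V)$ has a double eigenvalue $\pm1$, the Jordan block could in principle be arbitrarily non-normal as $E$ ranges over a sequence of band-edges — but since $\sigma(\Delta+V)$ has only finitely many band edges and $A_p(E,V)$ is a fixed continuous (indeed polynomial) matrix function of $E$, the constant $C=\sup_{E\in\sigma(\Delta+V)}\|A_p(E,V)\|$ is genuinely finite, and the Cayley--Hamilton representation handles the Jordan-block case and the diagonalizable case simultaneously without ever needing to diagonalize. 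So the proof is elementary once one resists the temptation to invoke the (non-uniform) convergence to $L(E,V)$ and instead argues directly at the level of the period-$p$ transfer matrix. \qedhere
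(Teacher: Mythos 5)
Your proof is correct, but it takes a genuinely different route from the paper's. The paper argues softly: it sets $f_i(E)=\tfrac{1}{2^i}\log\|A_{2^i}(E,V)\|$, observes that this is a decreasing sequence of continuous functions converging pointwise to $L(E,V)=0$ on the compact set $\sigma(\Delta+V)$, and invokes Dini's theorem to upgrade this to uniform convergence; the division $N=q2^{M_1}+r$ then finishes the argument. You instead go directly to the monodromy matrix: on the spectrum $|\tr A_p(E,V)|\le 2$ and $\det A_p(E,V)=1$, so Cayley--Hamilton gives $A_p^n=s_{n-1}(\tr A_p)A_p-s_{n-2}(\tr A_p)I$ with Chebyshev-type coefficients bounded linearly in $n$, whence $\|A_p(E,V)^n\|\le (1+C)n$ uniformly over $E\in\sigma(\Delta+V)$ with $C=\sup_E\|A_p(E,V)\|<\infty$ by compactness. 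Both proofs correctly identify and resolve the real issue --- uniformity in $E$, including at band edges where $A_p$ may have a nontrivial Jordan block --- but yours buys an explicit rate $\tfrac{1}{N}\log\|A_N\|\lesssim \tfrac{\log N}{N}$ and hence an effective choice of $M$, at the cost of being specific to the $2\times 2$, determinant-one structure of Schr\"odinger cocycles; the paper's Dini argument is less quantitative but would generalize verbatim to any setting where one knows continuity and pointwise vanishing of the Lyapunov exponent on a compact set. Your bookkeeping with $N=np+r$ and the constant $C'=\max_{0\le r<p}\sup_{E\in\sigma(\Delta+V)}\|A_r(E,V)\|$ is sound, since $A_r(E,V)$ is polynomial in $E$ and the spectrum is compact.
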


\begin{proof}
 Introduce the continuous functions
 $$
  f_i(E) = \frac{1}{2^i} \log \| A_{2^i} (E,V)\|.
 $$
 One can easily check that $f_{i+1}(E) \leq f_i(E)$ and
 that for $E \in \sigma(\Delta + V)$, we have $\lim_{i \to \infty} f_i(E) = L(E,V) = 0$.
 Hence, we obtain by Dini's therorem that there exists $M_1 \geq 1$ such that for $i \geq M_1$
 and $E \in \sigma(\Delta +V)$
 $$
  \frac{1}{2^i} \log \| A_{2^i} (E,V)\| \leq \frac{\mu}{2}.
 $$
 Let $N = q 2^{M_1} + r$, where $q \geq 1$ and $0 \leq r \leq 2^{M_1} -1$.
 For $E \in \sigma(\Delta +V)$, we get
 \begin{align*}
  \frac{1}{N} \log \|A_{N} (E,V)\| & \leq \frac{1}{q 2^{M_1} + r} \left ( \log \| A_{q 2^{M_1}} (E,V)\| + \log\| A_{r} (E,V)\|\right) \\
 & \leq \frac{\mu}{2}  + \frac{\log\| A_{r} (E,V)\|}{q 2^{M_1} + r} .
 \end{align*}
 Choose  $M_2 \geq 1$ so large that
 for $E \in \sigma(\Delta +V)$ and $0 \leq r \leq 2^{M_1} - 1$
 $$
 \frac{\log\p A_{r} (E,V)\p }{M_2 2^{M_1} + r} \leq \frac{\mu}{2}.
 $$
 The claim follows by taking $q \geq M_2$ or equivalently $N \geq M = M_2 \cdot 2^{M_1}$.
\end{proof}

By this lemma, we can ensure (iii) for $l = k$ as long as $m$ large enough.
Let $1 \leq l \leq k - 1$. By assumption,
$$
 \frac{1}{p_{k-1}} \log\|A_{p_{k-1}}(E, V^{k-1})\| \leq \sum_{s = l+1}^{k} \frac{1}{2^s}.
$$
By submultiplicativity, we get
$$
 \frac{1}{\ti{p}_{k-1}} \log\|A_{\ti{p}_{k-1}}(E, V^{k-1})\| \leq \sum_{s = l+1}^{k} \frac{1}{2^s}.
$$
We recall that $\hat{V}_m$ is a $\hat{p}_m = (m + 8)\ti{p}_{k-1}$ periodic potential,
such that $\hat{V}_m(n) = V^{k-1}(n)$ for $0 \leq m \ti{p}_{k-1}-1$.
Thus, we have
\be
 A_{\hat{p}_m}(E, \hat{V}_m) = (A_{\ti{p}_{k-1}}(E, V^{k-1}))^m \cdot \ti{A},
\ee
where $\ti{A}$ is some fixed matrix, whose norm is independent of $m$.
Hence, we obtain
$$
 \frac{1}{\hat{p}_m} \log\|A_{\hat{p}_m}(E, \hat{V}_m)\| \leq \sum_{s = l+1}^{k} \frac{1}{2^s} + \frac{1}{\hat{p}_m} \log\|\ti{A}\|.
$$
The claim now follows by choosing $m$ large enough.

\subsection{Construction of the initial potential.}

Last, we construct an initial potential $V^0$.
Define for $L \geq 1$, the potential $\ti{V}_L$ by
\be
 \ti{V}_L (n) = \begin{cases} 2, & 0 \leq n \leq L-1 ; \\
  -2, & L \leq n \leq 2L-1
 \end{cases}
\ee and continue it to be $2L$ periodic. For $k \in [0,\pi]$ the
function $\varphi(n) = \E^{i kn}$ solves $\Delta \varphi = 2 \cos(k)
\varphi$. Set $E = 2 \cos(k)$ and \be
 \psi(n) = \begin{cases} \varphi(n), & 0 \leq n \leq L-1 ; \\
  0, & \text{otherwise}.
 \end{cases}
\ee
Clearly, $\| \psi \| = \sqrt{L}$, and
$$
\| (\Delta + V^0 - E-2) \psi \| \leq 2.
$$
This implies that $\mathrm{dist}(E+2,\sigma(\Delta+V^0)) \leq \frac{2}{\sqrt{L}}.$
When $L > 4$, we have
$$
 \mathrm{dist}(E+2,\sigma(\Delta+V^0)) < 1.
$$
So for any $E_0 \in [0,4]$ we can find an interval $I \subset \sigma(\Delta + V^0)$
such that $I \subseteq [E_0 - 1, E_0 + 1].$

Similarly, we conclude that
for any $E_0 \in [-4,0]$ there is an interval $I \subset \sigma(\Delta + V^0)$
such that $I \subseteq [E_0 - 1, E_0 + 1].$ Thus, for $V^0$ we can find a collection $\Sigma_0$
of intervals $I \subset \sigma(\Delta + V^0)$ which is $1$-dense in $[-4,4]$. (iv) follows.

By Lemma~\ref{lem:transmatsmall}, (iii) follows since we can treat $V_0$ as a $p_0$
periodic where $p_0 = 2 L m$, and $m \in \Z^+$ is large.
There is nothing to check for (i), (ii), and (v).

%
%
%
%
\section{Proof of Theorem~\ref{thm:main2}}

The first step in the proof of Theorem~\ref{thm:main2}
is to prove the following proposition, which we postpone
to a later subsection.

\begin{proposition}\label{prop:perturb}
 Let $V_0$ be a $p_0$ periodic potential such that
 \be
  E_0 = \inf(\sigma(\Delta + V_0)) > 0.
 \ee
 Introduce $\gamma = \frac{1}{2} L(0, V^0)$. There exists
 a limit-periodic potential $V$ such that $\|V - V_0\| \leq 5 E_0$,
 $\inf(\sigma(\Delta + V)) = 0$, and
 \be
  \limsup_{E \to 0} L(E) \geq \gamma > \liminf_{E \to 0} L(E) = 0.
 \ee
 In particular, the Lyapunov exponent $L(E, V)$ is discontinuous
 at $0$.
\end{proposition}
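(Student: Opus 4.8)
The plan is to mimic the multiscale construction used in the proof of Theorem~\ref{thm:intA}, but now anchored at a single energy, namely $E=0$. The idea is to build a sequence of periodic potentials $V_0, V_1, V_2, \dots$, each agreeing with its predecessor on a long initial block, converging uniformly to a limit-periodic $V$ with $\|V-V_0\|\le 5E_0$, such that two competing features are arranged near $E=0$: on the one hand, $0$ stays in the spectrum of every $\Delta+V^k$ (so that, by Thouless' formula \eqref{eq:thouless} together with the fact that the density of states of $V$ is a uniform limit of those of the $V^k$, the value $L(0,V)$ stays close to $\gamma = \tfrac12 L(0,V^0) > 0$, giving $\limsup_{E\to 0}L(E)\ge\gamma$); on the other hand, there is a sequence of small intervals $I_k\ni E_k\to 0$ contained in $\sigma(\Delta+V^k)$ on which the transfer matrix norms are forced to be sub-exponentially small, exactly as in properties (iii)--(v) above, so that $L(E_k)\to 0$ and hence $\liminf_{E\to 0}L(E)=0$.

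Concretely, I would first normalize so that $E_0=\inf\sigma(\Delta+V_0)>0$; the perturbation that brings the bottom of the spectrum down to $0$ is a shift of size at most $E_0$ in the potential (since adding a constant $c$ to the potential shifts the spectrum by $c$), but to keep the spectrum genuinely rich near $0$ I would instead insert, into longer and longer periodic blocks, small pieces of potential drawn from a band of shifts $V_0 + \tfrac{j}{2^{k}}$ for a controlled range of $j$, exactly in the spirit of the definition of $\hat V_m$ in Section~3 — this is what both lowers $\inf\sigma$ towards $0$ and produces spectrum (intervals $J$) arbitrarily close to $0$, via the quasimode argument of Lemma~\ref{lem:subinterval} built on the periodic eigenfunctions of Lemma~\ref{lem:perisolu}. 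One checks that the total variation of the potential used across all scales is summable and bounded by $5E_0$ (choosing the step sizes and the number of scales so the geometric series closes), that $0$ never leaves the spectrum, and that $\inf\sigma(\Delta+V)=0$ in the limit. Simultaneously, as in Subsection~\ref{sub:ensuringiii}, by taking the block-repetition parameter $m$ large at each stage (using Lemma~\ref{lem:transmatsmall} and submultiplicativity of transfer matrices) one forces $\tfrac{1}{p_k}\log\|A_{p_k}(E,V^k)\|$ to be small on $\sigma(\Delta+V^l)$ for all $l\le k$; combined with \eqref{eq:lyapequal} this yields $L(E)\to 0$ along a sequence $E_k\to 0$, hence $\liminf_{E\to 0}L(E)=0$.

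For the $\limsup$ bound I would use the continuity/stability of the integrated density of states under uniform perturbation of the potential: since $V^k\to V$ uniformly and each $V^k$ is periodic with $0\in\sigma(\Delta+V^k)$ and $L(0,V^k)$ controllable, Thouless' formula \eqref{eq:thouless} shows $L(0,V)$ is the limit (or at least lies above the liminf) of $L(0,V^k)$; arranging $L(0,V^k)\ge\gamma$ for all $k$ — which holds because $V^k$ is a small perturbation of $V_0$ and $L(\cdot,V_0)$ is continuous with $L(0,V_0)=2\gamma>0$, so for small enough perturbations $L(0,V^k)\ge\gamma$ by Thouless' formula again — gives $L(0,V)\ge\gamma$, and since $0\in\sigma(\Delta+V)$ while $\liminf_{E\to 0}L(E)=0$ shows $L$ is not continuous at $0$, one concludes $\limsup_{E\to 0}L(E)\ge L(0,V)\ge\gamma$. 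I expect the main obstacle to be the bookkeeping that makes all the estimates compatible at once: the quasimode estimates producing spectrum near $0$ want the blocks $\ti p_{k-1}$ long (so $\delta\sqrt{m_0}>4$-type conditions hold), property (iii) wants the repetition count $m$ large, the $\|V-V_0\|\le 5E_0$ bound caps the cumulative size of all inserted shifts, and keeping $0$ in the spectrum at every scale constrains which shifts $\tfrac{j}{2^k}$ are admissible — threading all four simultaneously, while also ensuring $\inf\sigma(\Delta+V)=0$ exactly (not just $\le$ something small), is the delicate part. The relation to Johnson's continuum construction in \cite{john} suggests the right way to organize this is to fix the "target" energy $0$ from the start and perform the whole Section~3 machinery relative to a shrinking sequence of neighborhoods of $0$ rather than relative to all of $[-4,4]$.
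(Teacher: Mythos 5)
Your treatment of the $\liminf$ half (inserting repeated blocks, forcing $\frac{1}{p_k}\log\|A_{p_k}(E,V^k)\|$ small on $\sigma(\Delta+V^l)$ via Lemma~\ref{lem:transmatsmall} and submultiplicativity, and using \eqref{eq:lyapequal}) matches the paper. But the mechanism you propose for the $\limsup$ half contains a genuine contradiction. You want to keep ``$0$ in the spectrum of every $\Delta+V^k$'' while also arranging $L(0,V^k)\ge\gamma>0$. These two requirements are incompatible: each $V^k$ is periodic, and the Lyapunov exponent of a periodic potential vanishes identically on its spectrum, so $0\in\sigma(\Delta+V^k)$ forces $L(0,V^k)=0$. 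The paper does the opposite: it keeps $0$ \emph{strictly below} the spectrum of every $V^k$, with $E_k=\inf\sigma(\Delta+V^k)$ decreasing geometrically, $(3/5)^kE_0\le E_k\le(4/5)^kE_0$, so that $\inf\sigma(\Delta+V)=0$ only in the limit. Because $0$ lies in the resolvent set of each $\Delta+V^k$, the cocycle at $E=0$ is hyperbolic there, and that hyperbolicity is exactly what makes $L(0,\cdot)$ stable under the block-insertion.

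This points to the second gap: you justify $L(0,V^k)\ge\gamma$ by saying $V^k$ is a small perturbation of $V_0$ and invoking continuity of $L$ in the potential. That begs the question the whole paper is about --- the Lyapunov exponent is not continuous under such perturbations in general (on the spectrum it is being driven to $0$ by the very same construction), and the cumulative perturbation here is of size comparable to $E_0$, i.e.\ comparable to the spectral gap protecting $E=0$, not ``small.'' The paper's substitute is Lemma~\ref{lem:lowle}: writing $A_{\hat p}(E,\hat V_{m,h})=\ti A^h(A_{\ti p}(E,V))^m$, diagonalizing $A_{\ti p}(E,V)$ at the hyperbolic energy $E=0$ with eigenvalues $\E^{\pm\ti p L(0,V)}$, using Cayley--Hamilton to pick $h\in\{1,2\}$ so the leading coefficient $\spr{v}{\ti A^h v}+a\spr{v^\perp}{\ti A^h v}$ is nonzero, and reading off $L(0,\hat V_{m,h})\to L(0,V)$ from the trace formula. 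Your outline has no replacement for this argument, and without it property (iii) ($L(0,V^k)\ge(2-\sum_{s\le k}2^{-s})\gamma$) is unsupported. Finally, your closing step $\limsup_{E\to0}L(E)\ge L(0,V)$ goes the wrong way for a function that is an infimum of continuous functions (hence upper semicontinuous); the paper instead shows $L(E)\ge\gamma$ for all $E<0$ directly, using Thouless' formula, monotonicity of $E\mapsto\int\log|t-E|\,dN^k(t)$ below the (uniformly positive) spectra, and weak convergence $N^k\to N$, which yields the $\limsup$ bound without any semicontinuity claim at $E=0$.
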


We are now ready for

\begin{proof}[Proof of Theorem~\ref{thm:main2}]
 First note that the periodic potentials are dense in
 the space $\mathscr{L}$ of all limit periodic potentials.

 Let $V_0$ be any periodic potential and $\eps > 0$.
 Introduce the potential
 $$
  \ti{V}_0 = V_0 - \inf(\Delta + V_0) + \frac{\eps}{5}.
 $$
 One can check that $\ti{V}_0$ satisfies the assumptions
 of the previous proposition, and we thus obtain a potential
 $\ti{V}$ such that the Lyapunov exponent of $\ti{V}$
 is discontinuous and $\|\ti{V}_0 - \ti{V}\| \leq \eps$.

 We now see that $V = \ti{V} + \inf(\Delta + V_0) - \frac{\eps}{5}$
 satisfies that its Lyapunov exponent is discontinuous and
 $\|V - V_0\| \leq \eps$. Hence, the potentials with
 discontinuous Lyapunov exponent are dense.
\end{proof}

\subsection{Proof of Proposition~\ref{prop:perturb}}

In the following subsection, we will construct a sequence of
potentials $V^k$ with the following properties.

\begin{enumerate}
  \item $V^k$ is $p_k$ periodic and satisfies
   \be
    \|V^{k} - V^{k-1}\| \leq \frac{2}{5} E_{k-1},
   \ee
   and $V^k(n) = V^{k-1}(n)$ for $0 \leq n \leq p_{k - 1}$.
  \item The bottom of the spectrum $E_k = \inf \sigma(\Delta + V^k)$ satisfies
   \be
    \left(\frac{3}{5}\right)^k E_0 \leq E_k \leq \left(\frac{4}{5}\right)^k E_0.
   \ee
  \item The Lyapunov exponent at energy $0$ satisfies
   \be
    L(0, V^k) \geq \left(2 - \sum_{s = 1}^{k} \frac{1}{2^s} \right) \gamma.
   \ee
  \item For $1 \leq l \leq k$ and  every $E \in \sigma(\Delta + V^l)$
   \be
    \frac{1}{p_k} \log\|A_{p_k}(E, V^k)\| \leq  \sum_{s = l+1}^{k+1} \frac{1}{2^s}.
   \ee
 \end{enumerate}

From properties (i) and (ii), we see that
\be
 \|V^{k} - V^{k-1}\| \leq \left(\frac{4}{5}\right)^k E_0.
\ee
Hence the limit
\be
 V = \lim_{k \to \infty} V^k,
\ee
exists in $\ell^\infty(\Z)$ and $\|V - V^0\| \leq 5 E_0$.

\begin{lemma}
 We have that
 \be
  L(0) \geq \gamma.
 \ee
\end{lemma}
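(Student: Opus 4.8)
The plan is to exploit the key fact that $0$ remains inside every spectrum $\sigma(\Delta + V^k)$, so that Lemma~\ref{lem:perisolu}'s construction, and more importantly properties (iii) and (iv), apply at $E = 0$. First I would establish that $0 \in \sigma(\Delta + V)$ and indeed that $L(0)$ is well-approximated by the periodic Lyapunov exponents $L(0, V^k)$. Since $V^k(n) = V^{k-1}(n)$ for $0 \le n \le p_{k-1}$ (property (i)), and $V^k$ agrees with $V$ on $[0, p_k]$, we have $A_{p_k}(0, V) = A_{p_k}(0, V^k)$. By Thouless' formula \eqref{eq:thouless} applied to the periodic potential $V^k$, one has $L(0, V^k) = \frac{1}{p_k}\log|\text{discriminant-type quantity}|$; more directly, for a $p_k$-periodic potential $L(0, V^k) = \frac{1}{p_k}\log\rho(A_{p_k}(0,V^k))$ where $\rho$ denotes the spectral radius, which since $\det A_{p_k} = 1$ is just $\frac{1}{p_k}\log\|A_{p_k}(0,V^k)\|$ up to a bounded additive error, or one uses that along the subsequence $N = p_k$ the limit defining $L(0,V)$ can be read off.

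The core of the argument is then: from property (iii), $L(0, V^k) \geq \left(2 - \sum_{s=1}^k 2^{-s}\right)\gamma$, and letting $k \to \infty$ this lower bound tends to $\left(2 - 1\right)\gamma = \gamma$. So it suffices to show $L(0) = \lim_{k\to\infty} L(0, V^k)$, or at least $L(0) \geq \limsup_{k} L(0, V^k)$ modulo the subtlety that \eqref{eq:lyapequal} only gives $L(E) = \lim_N \frac1N \log\|A_N(E,V_\omega)\|$ for \emph{almost every} $E$ — and $E = 0$ might be exceptional. To circumvent this I would instead run the Thouless-formula argument: $L$ is continuous from the IDS representation in a suitable sense only in an averaged way, so the cleanest route is to use lower semicontinuity / Fatou-type reasoning. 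Concretely: $L(0) = \lim_{N\to\infty} L_N(0)$ with $L_N(E) = \frac{1}{2^N}\int \log\|A_{2^N}(E, V_\omega)\|\,d\mu$ as in the proof of Theorem~\ref{thm:intA}, and $L_N(0) \geq \frac{1}{2^N}\log\|A_{2^N}(0, V)\|$ evaluated at a single point since the potential is limit-periodic (uniquely ergodic hull), actually $L_N(0)$ equals $\lim_{M} \frac{1}{M}\sum$ over translates; pushing through, $L(0) = \lim_k \frac{1}{p_k}\log\|A_{p_k}(0,V)\| = \lim_k \frac{1}{p_k}\log\|A_{p_k}(0,V^k)\| = \lim_k L(0,V^k) \geq \gamma$.

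The main obstacle I anticipate is precisely this last identification: justifying that $L(0)$ — defined via the $\mu$-average in \eqref{eq:defLE} — coincides with $\lim_k \frac{1}{p_k}\log\|A_{p_k}(0, V^k)\|$, since the clean statement \eqref{eq:lyapequal} is only almost-everywhere in $E$. I would handle this by noting that for the limit-periodic (hence uniquely ergodic) potential $V$, the subadditive ergodic theorem gives $L(0) = \lim_N \frac1N \log\|A_N(0, V_\omega)\|$ for \emph{every} $\omega$ in the hull along the full sequence, and in particular $L(0) = \lim_k \frac{1}{p_k}\log\|A_{p_k}(0, V)\|$ — unique ergodicity upgrades the a.e.-$\omega$ convergence to every $\omega$, and this is for fixed $E=0$, sidestepping the bad-energy set $\mathcal{E}_\omega$. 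Combined with $A_{p_k}(0,V) = A_{p_k}(0, V^k)$ from property (i) and the periodic identity $\frac{1}{p_k}\log\|A_{p_k}(0,V^k)\| \geq L(0,V^k)$ (the finite-$N$ quantities dominate the limit, by submultiplicativity as used in Lemma~\ref{lem:transmatsmall}), property (iii) yields $L(0) \geq \lim_k (2 - \sum_{s=1}^k 2^{-s})\gamma = \gamma$, completing the proof.
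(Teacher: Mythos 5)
Your route is genuinely different from the paper's: you work at the single energy $E=0$ on the transfer-matrix side, using $V\equiv V^k$ on $[0,p_k]$ together with $\frac{1}{p_k}\log\|A_{p_k}(0,V^k)\|\ge L(0,V^k)$ and property (iii), whereas the paper never touches the cocycle at $E=0$ at all: it uses Thouless' formula to see that $E\mapsto L(E,V^k)$ is decreasing on $(-\infty,0]$ (because $\sigma(\Delta+V^k)\subset(0,\infty)$ by (ii)), passes to the limit $k\to\infty$ at fixed $E<0$ via weak convergence of the integrated densities of states, and finally uses $L(0)=\lim_{E\uparrow 0}L(E)$, again from Thouless' formula. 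Your chain of inequalities is correct except at one point --- and it is the decisive one.

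The gap: you justify $L(0)=\lim_k\frac{1}{p_k}\log\|A_{p_k}(0,V)\|$ by asserting that unique ergodicity upgrades the subadditive ergodic theorem to everywhere-in-$\omega$ convergence of $\frac1N\log\|A_N(E,V_\omega)\|$ to $L(E)$. That assertion is false. For continuous subadditive cocycles over uniquely ergodic systems one only gets the one-sided uniform bound $\limsup_N\frac1N\log\|A_N(E,V_\omega)\|\le L(E)$ for every $\omega$; the reverse inequality can fail off a null set of $\omega$, and non-uniformity of exactly this kind is the phenomenon the whole paper is engineering (compare property (iv), which forces $\frac{1}{p_k}\log\|A_{p_k}(E,V)\|$ to be small along the subsequence $p_k$ at many energies where $L$ need not be small). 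Fortunately, the direction you actually need is the one that survives: $\gamma\le\liminf_k\frac{1}{p_k}\log\|A_{p_k}(0,V)\|\le\limsup_N\frac1N\log\|A_N(0,V)\|\le L(0)$. So your proof is repairable, but the false two-sided claim must be replaced by the correct one-sided uniform upper bound, proved for instance by fixing $M$ with $\frac1M\int\log\|A_M(0,V_\omega)\|\,d\mu(\omega)<L(0)+\eps$, splitting $A_N$ into blocks of length $M$, and invoking uniform convergence of Birkhoff averages of the continuous function $\omega\mapsto\log\|A_M(0,V_\omega)\|$. (The side remark that $L_N(0)\ge\frac{1}{2^N}\log\|A_{2^N}(0,V)\|$ at a single point is likewise unjustified --- an average need not dominate a particular sample --- but it is not needed once the above is in place.)
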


\begin{proof}
 By (ii), we have that $\inf(\sigma(\Delta + V^k)) > 0$.
 Hence by Thouless' formula \eqref{eq:thouless}
 $$
  L(E, V^k) = \int \log|t - E| dN^k(t),
 $$
 we have that $E \mapsto L(E, V^k)$ is decreasing in $E \leq 0$.

 This and property (iii) imply for $E < 0$ and $k \geq 1$ that
 $$
  L(E, V^k) \geq \gamma.
 $$
 Since $N^k \to N$ weakly and $\log|t - E|$ is a bounded
 and continuous function for $E < 0$, we also obtain
 $$
  L(E) \geq \gamma
 $$
 for $E < 0$. Thouless formula even implies that
 $$
  L(0) = \lim_{E\uparrow 0} L(E).
 $$
 This implies the claim.
\end{proof}

Similarly as for \eqref{eq:LEleq2l} in the previous section,
we have that properties (i) and (iv) imply that
\be
 L(E) \leq \frac{1}{2^{k}}
\ee
for almost every $E \in \sigma(\Delta + V^k)$.
Hence, we have that
\be
 \limsup_{E \to 0} L(E) > \liminf_{E \to 0} L(E) = 0.
\ee
This finishes the proof of Proposition~\ref{prop:perturb}.

\subsection{Construction of the sequence of potentials}

In order to construct the sequence of potentials $V^k$,
we will prove the following lemmas. These imply
the existence of $V^k$ given $V^{k-1}$ by applying
them to $V = V^{k-1}$.

\begin{lemma} \label{lem:lowerspec}
 Let $V$ be a $p$ periodic potential and $E_0 = \inf\sigma(\Delta + V).$
 Let $\tilde{p} = m_0 p$ for sufficiently large $m_0$.
 Define for $1 \leq l \leq h$ the intervals $I_l$ by
 \be
  I_l = [m \tilde{p} + (l-1) \tilde{p}, m \tilde{p} + l \tilde{p} - 1].
 \ee
 Define the $\hat{p} = (m + h ) \tilde{p}$ periodic potential $\hat{V}_{m,h}$  by
 \be\label{eq:defhatV}
  \hat{V}_{m,h} (n) = \begin{cases}
   V(n) & [0, m \tilde{p} -1];\\
   V(n) -  \frac{2 E_0}{5} & n \in I_l,\quad 1 \leq l \leq h. \end{cases}
 \ee
 Then we have
 \be
  \frac{3E_0}{5} \leq \inf\sigma(\Delta + \hat{V}_{m,h}) \leq \frac{4 E_0}{5}.
 \ee
\end{lemma}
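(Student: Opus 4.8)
The plan is to control $\inf\sigma(\Delta + \hat{V}_{m,h})$ from both sides using the variational characterization $\inf\sigma(\Delta+W) = \inf_{\|u\|=1}\langle u,(\Delta+W)u\rangle$ together with the fact that $\hat{V}_{m,h}$ is obtained from $V$ by lowering the potential by $\frac{2E_0}{5}$ on the block $\bigcup_l I_l$ of length $h\tilde p = h m_0 p$ and leaving it unchanged elsewhere. First I would establish the upper bound $\inf\sigma(\Delta+\hat{V}_{m,h})\le \frac{4E_0}{5}$. For this, take the bounded generalized eigenfunction $\psi$ at the bottom of the spectrum of $\Delta+V$ provided by Lemma~\ref{lem:perisolu} (so $(\Delta+V)\psi = E_0\psi$ and $\sum_{k=1}^p|\psi(m+k)|^2 = 1$ for all $m$), restrict it to the central portion of $\bigcup_l I_l$, and use it as a test function: its normalized Rayleigh quotient for $\Delta + \hat{V}_{m,h}$ equals $E_0 - \frac{2E_0}{5} = \frac{3E_0}{5}$ up to boundary error terms coming from the two endpoints of the block, exactly as in the proof of Lemma~\ref{lem:subinterval}. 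Since $\|\varphi\|\sim\sqrt{hm_0}$ while the boundary defect has norm $O(1)$, the error is $O((hm_0)^{-1/2})$; choosing $m_0$ large (which the statement allows) pushes this below $\frac{E_0}{5}$, giving $\inf\sigma(\Delta+\hat{V}_{m,h}) \le \frac{3E_0}{5} + \frac{E_0}{5} = \frac{4E_0}{5}$.

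Next I would prove the lower bound $\inf\sigma(\Delta+\hat{V}_{m,h})\ge \frac{3E_0}{5}$. Since $\hat{V}_{m,h} \ge V - \frac{2E_0}{5}$ pointwise, operator monotonicity gives $\Delta + \hat{V}_{m,h} \ge \Delta + V - \frac{2E_0}{5} \ge E_0 - \frac{2E_0}{5} = \frac{3E_0}{5}$, so $\inf\sigma(\Delta+\hat{V}_{m,h}) \ge \frac{3E_0}{5}$ immediately. This half is essentially free; note it does not even need the periodic structure, only $\inf\sigma(\Delta+V) = E_0$.

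The main obstacle is the upper bound, specifically making the test-function computation rigorous on a \emph{finite} block that must nonetheless produce a point in the spectrum of the \emph{periodic} operator $\Delta+\hat{V}_{m,h}$. The clean way is: (a) produce an approximate eigenfunction $\varphi$ supported in $\bigcup_l I_l$ with $\|(\Delta+\hat{V}_{m,h} - \tfrac{3E_0}{5})\varphi\|/\|\varphi\| \le \tfrac{2}{\sqrt{hm_0}}$ by restricting $\psi$, which shows $\dist(\tfrac{3E_0}{5},\sigma(\Delta+\hat{V}_{m,h})) \le \tfrac{2}{\sqrt{hm_0}}$; (b) since $\sigma(\Delta+\hat{V}_{m,h})$ is a union of bands and is bounded below by $\tfrac{3E_0}{5}$ by the previous paragraph, its infimum lies in $[\tfrac{3E_0}{5}, \tfrac{3E_0}{5} + \tfrac{2}{\sqrt{hm_0}}]$; (c) choose $m_0$ large enough that $\tfrac{2}{\sqrt{hm_0}} \le \tfrac{E_0}{5}$. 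One should double-check that the boundary defect really is $O(1)$ in norm uniformly in $m$, $h$ and in the choice of where the block sits relative to the period $p$ — this is where writing out the action of $\Delta$ at the four block-endpoints (two outer, and interior junctions between consecutive $I_l$ where the potential is in fact continuous so no defect arises) matters; the interior junctions contribute nothing since $\hat{V}_{m,h}$ takes the same value $V(n) - \tfrac{2E_0}{5}$ on all of $\bigcup_l I_l$, so the defect is genuinely concentrated at the two outer endpoints and is controlled by $2\sup|\psi|$, which is a fixed constant depending only on $V$. Combining (a)--(c) with the lower bound completes the proof.
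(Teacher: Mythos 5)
Your proposal is correct and follows essentially the same route as the paper: the upper bound via a Weyl/test-function argument obtained by restricting the bottom-of-spectrum generalized eigenfunction of Lemma~\ref{lem:perisolu} to the modified block (exactly the mechanism of Lemma~\ref{lem:subinterval}, which the paper's proof invokes), and the lower bound from the pointwise inequality $\hat{V}_{m,h}\ge V-\tfrac{2E_0}{5}$. The only cosmetic difference is that you anchor the test function at $E_0$ itself rather than at an energy $\hat{E}$ slightly inside the first band as the paper does, which is legitimate since $E_0\in\sigma(\Delta+V)$ and in fact combines more cleanly with the one-sided lower bound.
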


\begin{proof}
As in the proof of Lemma~\ref{lem:subinterval}, let $E_{-} = E_0$
and $\delta = \frac{E_0}{5}$. Pick $\hat{E}$ in the first band of
$\sigma(\Delta+V)$ such that $\hat{E} - E_{0} < E_{0}/10$. Also,
$m_0$ will be picked sufficiently large so that $\delta \sqrt{m_0} >
4$. Then, we will get
\be
\dist(\hat{E}- \frac{2E_0}{5}, \sigma(\Delta + \hat{V}_{m,h})) \leq \frac{\delta}{2} = \frac{E_0}{10}.
\ee
The lemma follows.
\end{proof}

We can view the $\hat{p}$ period of $\hat{V}_{m,h}$ as a
concatenation of two parts:
\begin{itemize}
 \item $m$ pieces of the $\ti{p}$ period of $V$.
 \item $h$ pieces of the $\ti{p}$ period of $\tilde{V}$, where $\tilde{V} = V - \frac{2E_0}{5}.$
\end{itemize}
Denote $\ti{A} = A_{\ti{p}}(E,\ti{V})$,
then we have
\be
 A_{\hat{p}}(E,\hat{V}_{m,h}) =  (\ti{A})^h \cdot (A_{\ti{p}}(E, V))^{m}.
\ee
Let $E \notin \inf(\sigma(\Delta + V))$.
Then we can chose two normalized vectors $v,u \in C^2$, such
that
\be
 A_{\tilde{p}}(E, V) v = \E^{\tilde{p} L(E,V)} v,\qquad
 A_{\ti{p}}(E, V) u = \E^{-\ti{p} L(E,V)} u.
\ee
Denote by $v^{\perp}= a v + b u$ a vector orthonormal to $v$.
We will first show

\begin{lemma}
 There exists $h \in \{1,2\}$ such that
 \be
  \spr{v}{\ti{A}^h v} + a \spr{v^{\perp}}{\ti{A}^h v} \neq 0.
 \ee
\end{lemma}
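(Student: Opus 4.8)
The statement asserts that we can pick $h \in \{1,2\}$ so that the quantity
$$
 F(h) := \spr{v}{\ti{A}^h v} + a \spr{v^{\perp}}{\ti{A}^h v}
$$
is nonzero. The natural strategy is proof by contradiction: assume $F(1) = 0$ and $F(2) = 0$ simultaneously and derive a contradiction with the fact that $\ti A = A_{\ti p}(E,\ti V)$ is a hyperbolic (or at least non-scalar) symplectic matrix whose eigenvectors we understand. The plan is to expand everything in the eigenbasis $\{v,u\}$ of $A_{\ti p}(E,V)$ — note that $\ti A$ and $A_{\ti p}(E,V)$ differ only by a shift of the potential by a constant $\tfrac{2E_0}{5}$, so $\ti A = A_{\ti p}(E - \tfrac{2E_0}{5}, V)$ in an appropriate sense — and turn the two vanishing conditions into two linear/algebraic relations among the matrix entries of $\ti A$ in the $\{v,u\}$-basis.

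Concretely, write $\ti A$ in the basis $\{v,u\}$ as $\begin{pmatrix} p & q \\ r & s\end{pmatrix}$. Then $\ti A v = p v + r u$, so $\spr{v}{\ti A v}$ and $\spr{v^\perp}{\ti A v} = \spr{a v + b u}{p v + r u}$ become explicit expressions in $p,r$ and the Gram data of $v,u$ (which are fixed unit vectors, generally not orthogonal). Since $v^\perp \perp v$, we have $a\spr{v}{v} + b\spr{v}{u} = 0$, which pins down the ratio $a/b$; substituting, the condition $F(1)=0$ becomes a single linear equation in $p$ and $r$ with coefficients depending only on $\spr{v}{v},\spr{v}{u},\spr{u}{u}$. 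Likewise $\ti A^2 v = (p^2 + qr) v + (rp + sr) u$, so $F(2) = 0$ is a quadratic relation in the entries $p,q,r,s$. Using $\det \ti A = 1$ (symplecticity) and $\tr \ti A = p + s$ to eliminate variables, the pair $F(1)=F(2)=0$ should force $\ti A$ to be a scalar multiple of the identity, or to coincide in its action on the relevant subspace with $A_{\ti p}(E,V)$ itself — either of which is impossible for generic $m_0$ and a genuine perturbation of the potential. One clean way to see the contradiction: if $F(1)=0$ and $F(2)=0$ then the vector $\ti A v$ and $\ti A^2 v$ both lie in the same one-dimensional subspace determined by the constraint, which forces $v$ to be an eigenvector of $\ti A$; but the eigenvectors of $\ti A$ are (for $E$ not in the spectrum of $\Delta + \ti V$) distinct from $v$, contradiction.

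The main obstacle I anticipate is handling the degenerate case where $\ti A$ is \emph{not} hyperbolic at the energy $E$ in question — i.e.\ $E$ lies in the spectrum of $\Delta + \ti V$ even though it is not at the bottom of $\sigma(\Delta + V)$ — so that $\ti A$ may be elliptic or parabolic and "eigenvector of $\ti A$" is not a robust notion. To deal with this I would either (a) restrict attention to energies $E$ near $0$ where, by Lemma~\ref{lem:lowerspec}, $\inf\sigma(\Delta + \ti V) = \inf\sigma(\Delta + \hat V_{m,h})$ is bounded below and one can arrange $E$ below it so that $\ti A$ is genuinely hyperbolic, or (b) argue more robustly that $\{v, \ti A v, \ti A^2 v\}$ cannot all be proportional unless $\ti A$ has $v$ as an eigenvector, which holds for \emph{any} $2\times2$ matrix regardless of its spectral type, and then note that $v$ being an eigenvector of both $A_{\ti p}(E,V)$ and $\ti A$ would force these two matrices to be simultaneously diagonalizable, contradicting that their difference corresponds to a non-constant (indeed, nonzero constant) potential shift over a long period $\ti p = m_0 p$ with $m_0$ large. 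I would use approach (b) as the cleaner one, since it avoids case analysis and the key linear-algebra fact — $\dim \mathrm{span}\{v,\ti A v,\ti A^2 v\} \le 1 \iff v$ is an eigenvector of $\ti A$ — is elementary.
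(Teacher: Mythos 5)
Your overall strategy (assume $F(1)=F(2)=0$ and derive a contradiction) matches the paper's, and your key geometric observation is correct: both vanishing conditions say that $\ti{A}v$ and $\ti{A}^2v$ lie in the kernel of the nonzero linear functional $w \mapsto \spr{v}{w} + a\spr{v^{\perp}}{w}$, which is one-dimensional, so $\ti{A}v$ and $\ti{A}^2v$ are proportional and hence $v$ is an eigenvector of $\ti{A}$. But the step where you then derive the contradiction is a genuine gap. You assert that the eigenvectors of $\ti{A}$ are "distinct from $v$," and your proposed justification --- that $v$ being a common eigenvector of $A_{\ti{p}}(E,V)$ and $\ti{A}$ would force simultaneous diagonalizability, which is incompatible with the potential shift --- does not hold up: two $2\times 2$ matrices sharing a single eigenvector are not thereby simultaneously diagonalizable, and nothing in the setup rules out the two transfer matrices sharing an eigenvector at some particular energy. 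So as written the argument fails precisely in the case you have reduced to. The irony is that this case is trivial to dispose of, just not the way you propose: if $\ti{A}v = \lambda v$ then, since $\spr{v}{v}=1$ and $\spr{v^{\perp}}{v}=0$, one gets $F(1) = \lambda \neq 0$ directly (as $\det\ti{A}=1$), contradicting $F(1)=0$. With that one line your proof closes.

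The paper avoids all of this by a purely algebraic route: Cayley--Hamilton gives $\ti{A}^2 - \tr(\ti{A})\,\ti{A} + I = 0$; pairing with $\spr{v}{\cdot\,v}$ and $\spr{v^{\perp}}{\cdot\,v}$ and taking the combination defining $F$ yields the identity
\begin{equation*}
 F(2) - \tr(\ti{A})\,F(1) = -1,
\end{equation*}
so $F(1)$ and $F(2)$ cannot both vanish. This is shorter, requires no case analysis, and is insensitive to whether $\ti{A}$ is hyperbolic, parabolic, or elliptic --- the degenerate cases you worried about simply never arise. Your basis-expansion computations in the middle of the proposal are not needed for either argument.
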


\begin{proof}
 From the Cayley--Hamilton theorem, we have that
 $\ti{A}^2 - \tr(\ti{A}) \ti{A} + I = 0$.
 Taking $\spr{v}{. v}$ and $\spr{v^{\perp}}{ . v}$,
 we obtain
 \begin{align*}
  \spr{v}{\ti{A}^2v} - \tr(\ti{A}) \spr{v}{ \ti{A} v} + 1 &= 0, \\
  \spr{v^{\perp}}{\ti{A}^2v} - \tr(\ti{A}) \spr{v^{\perp}}{ \ti{A} v} &= 0.
 \end{align*}
 Multiplying the second equation by $a$ and adding the two together,
 we obtain
 $$
  \Big(\spr{v}{\ti{A}^2v} + a \spr{v^{\perp}}{\ti{A}^2v}\Big)
  - \tr(\ti{A}) \Big(\spr{v}{ \ti{A} v} + a \spr{v^{\perp}}{ \ti{A} v}\Big) = -1.
 $$
 Hence, the claim follows.
\end{proof}

We now come to

\begin{lemma} \label{lem:lowle}
 Let $E \notin \sigma(\Delta + V)$.
 Then there exists $h \in \{1,2\}$ such that
 \be
  L(E, \hat{V}_{m,h}) \to L(E, V)
 \ee
 as $m \to \infty$.
\end{lemma}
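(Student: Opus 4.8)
The plan is to combine the decomposition $A_{\hat{p}}(E,\hat{V}_{m,h}) = (\ti{A})^h \cdot (A_{\ti{p}}(E,V))^m$ with the hyperbolicity of $A_{\ti{p}}(E,V)$ for $E\notin\sigma(\Delta+V)$. Since $E$ lies in a spectral gap of the $\ti{p}$-periodic potential $V$, the matrix $B := A_{\ti{p}}(E,V)$ has real eigenvalues $\E^{\pm\ti{p}L(E,V)}$ with $L(E,V)>0$, and eigenvectors $v,u$ as in the statement. Writing a general vector in the $\{v,u\}$ basis, $B^m$ acts by multiplying the $v$-component by $\E^{m\ti{p}L(E,V)}$ and the $u$-component by $\E^{-m\ti{p}L(E,V)}$. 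Hence, for any fixed invertible matrix $C$, the growth rate of $\|C B^m\|$ as $m\to\infty$ is governed by how $C$ acts on the expanding direction $v$: one has $\frac{1}{m}\log\|C B^m\| \to \ti{p}\,L(E,V)$ \emph{provided} $Cv \neq 0$, since then $\|CB^m w\|$ is comparable to $\E^{m\ti{p}L(E,V)}\|Cv\|\,|\langle w,\cdot\rangle|$ for a typical $w$; if instead $Cv=0$ the rate could drop. Applying this with $C = \ti{A}^h$, the Lyapunov exponent satisfies
\[
 L(E,\hat{V}_{m,h}) = \frac{1}{\hat{p}}\log\|A_{\hat{p}}(E,\hat{V}_{m,h})\|
 = \frac{m}{(m+h)\ti{p}}\cdot\frac{1}{m}\log\|\ti{A}^h B^m\| + o(1)
 \to L(E,V)
\]
as $m\to\infty$, where we used $\hat{p}=(m+h)\ti{p}$ so the prefactor $\frac{m\ti{p}}{(m+h)\ti{p}}\to 1$, and where the convergence of $\frac{1}{m}\log\|\ti{A}^h B^m\|$ to $\ti{p}L(E,V)$ requires exactly the non-degeneracy condition $\ti{A}^h v \neq 0$.

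The first key step is therefore to reduce the claim to showing $\ti{A}^h v \neq 0$ for some $h\in\{1,2\}$. But the preceding lemma gives precisely $\langle v, \ti{A}^h v\rangle + a\langle v^\perp, \ti{A}^h v\rangle \neq 0$ for some $h\in\{1,2\}$; since $\{v, v^\perp\}$ is an orthonormal basis of $\C^2$, the left side is a (conjugate-linear) pairing of $\ti{A}^h v$ against a fixed nonzero vector, so its non-vanishing forces $\ti{A}^h v \neq 0$. (More precisely, $\langle v,\ti{A}^h v\rangle + a\langle v^\perp,\ti{A}^h v\rangle = \langle \bar a v^\perp + v , \ti{A}^h v\rangle$ up to conjugation, and this is $0$ whenever $\ti{A}^h v=0$; contrapositive gives what we need.) So I would fix that $h$, and for that choice carry out the growth-rate estimate.

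The second key step is the quantitative two-sided bound on $\frac{1}{m}\log\|\ti{A}^h B^m\|$. The upper bound $\limsup \le \ti{p}L(E,V)$ is immediate from submultiplicativity: $\|\ti{A}^h B^m\|\le\|\ti{A}^h\|\,\|B\|^m \cdot(\text{constant})$ — actually one should use $\|B^m\|\le c\,\E^{m\ti{p}L(E,V)}$, which holds since $B$ is diagonalizable with spectral radius $\E^{\ti{p}L(E,V)}$. For the lower bound, pick a unit vector $w$ with a nonzero $v$-component; then $B^m w = c_v\E^{m\ti{p}L(E,V)}v + c_u\E^{-m\ti{p}L(E,V)}u$, so $\ti{A}^h B^m w = c_v\E^{m\ti{p}L(E,V)}(\ti{A}^h v) + O(\E^{-m\ti{p}L(E,V)})$, and since $\ti{A}^h v\neq 0$ this has norm $\ge \frac12|c_v|\,\|\ti{A}^h v\|\,\E^{m\ti{p}L(E,V)}$ for $m$ large, giving $\liminf\frac1m\log\|\ti{A}^h B^m\|\ge \ti{p}L(E,V)$. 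Combining the two bounds yields the limit, and dividing by $\hat p$ and letting $m\to\infty$ finishes the proof.

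The main obstacle is the degeneracy issue: a priori $\ti{A}$ could kill the expanding direction $v$ (indeed $\ti{A}v$ could be parallel to $u$, the contracting direction of $B$, in which case $\ti{A}^1 B^m$ would have the \emph{wrong} growth), which is exactly why one must allow $h\in\{1,2\}$ rather than just $h=1$, and why the Cayley–Hamilton argument of the previous lemma is needed — it rules out simultaneous degeneracy at $h=1$ and $h=2$. Once that combinatorial point is secured, the remaining analysis is the routine hyperbolic-cocycle estimate sketched above. One should also note the hypothesis should read $E\notin\sigma(\Delta+V)$ (a genuine gap energy), so that $L(E,V)>0$ and $B$ is hyperbolic; on the spectrum itself the argument degenerates and is not needed here.
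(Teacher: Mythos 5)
There is a genuine gap, and it sits exactly where the previous lemma is supposed to do its work. For a $q$-periodic potential $W$ the Lyapunov exponent equals $\frac{1}{q}\log\rho(A_q(E,W))$, the \emph{spectral radius} of the monodromy matrix, not $\frac{1}{q}\log\|A_q(E,W)\|$; your identity $L(E,\hat{V}_{m,h})=\frac{1}{\hat{p}}\log\|A_{\hat{p}}(E,\hat{V}_{m,h})\|$ is false. Since $\rho(\cdot)\le\|\cdot\|$, your norm estimates only yield the upper bound $\limsup_m L(E,\hat{V}_{m,h})\le L(E,V)$; the lower bound, which is the whole point, requires a lower bound on the spectral radius, hence on $|\tr(\ti{A}^h B^m)|$, and a lower bound on the norm says nothing about the trace. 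Your own cautionary scenario makes this concrete: if $\ti{A}v$ is parallel to the contracting direction $u$, then $\|\ti{A}B^m\|\ge\|\ti{A}B^m v\|=\E^{m\ti{p}L(E,V)}\|\ti{A}v\|$ still grows at the full rate (so your criterion sees nothing wrong), yet $\tr(\ti{A}B^m)=O(\E^{-m\ti{p}L(E,V)})$, the eigenvalues of $A_{\hat{p}}(E,\hat{V}_{m,1})$ lie on the unit circle, and $L(E,\hat{V}_{m,1})=0$ rather than $\approx L(E,V)>0$.

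Relatedly, your reduction of the previous lemma to ``$\ti{A}^h v\ne 0$'' is vacuous: $\ti{A}$ is a transfer matrix with determinant one, so $\ti{A}^h v\ne 0$ for every $h$. Writing $\ti{A}^h v=\alpha v+\beta u$ and using $v^\perp=av+bu$ (all quantities real), one checks that $\spr{v}{\ti{A}^h v}+a\spr{v^\perp}{\ti{A}^h v}=\alpha$; so the previous lemma asserts precisely that $\ti{A}^h v\notin\mathrm{span}(u)$, i.e.\ that the coefficient of $\E^{m\ti{p}L(E,V)}$ in the expansion of $\tr(\ti{A}^h B^m)$ is nonzero. The paper's route is to invoke the trace formula for periodic Lyapunov exponents, $L(E,\hat{V}_{m,h})=\frac{1}{\hat{p}}\log\bigl(|\tfrac{1}{2}\tr A_{\hat{p}}|+\sqrt{(\tfrac{1}{2}\tr A_{\hat{p}})^2-1}\bigr)\ge\frac{1}{\hat{p}}\log|\tr(\ti{A}^h B^m)|-\frac{\log 2}{\hat{p}}$, and then expand the trace in the basis $\{v,v^\perp\}$ to get $|\tr(\ti{A}^h B^m)|\ge c\,\E^{m\ti{p}L(E,V)}$ for large $m$. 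To repair your argument you must run the lower bound on the trace (equivalently the spectral radius) of $\ti{A}^h B^m$ rather than on its norm; the upper bound via submultiplicativity can stay as is.
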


\begin{proof}
 Let $h$ be as in the previous lemma.
 The lower bound can be obtained by a similar argument as
 in Subsection~\ref{sub:ensuringiii}.

 By (7.10) in \cite{tjac} and a computation, we have
 \begin{align*}
  L(E, \hat{V}_{m,h}) & = \frac{1}{\hat{p}} \log \left (\left |\frac{\tr(A_{\hat{p}}(E,\hat{V}_{m,h}))}{2} \right | +
  \sqrt {\left (\frac{\tr(A_{\hat{p}}(E,\hat{V}_{m,h}))}{2}\right )^2 - 1} \right )\\
  & \geq \frac{1}{\hat{p}} \log{\left | \tr(\ti{A}^h (A_{\ti{p}}(E,V))^m ) \right |} - \frac{\log(2)}{\hat{p}}.
 \end{align*}
 Let us now evaluate $\tr(\ti{A}^h (A_{\ti{p}}(E,V))^m )$. We have
 \begin{align*}
  \tr(\ti{A}^h (A_{\ti{p}}(E,V))^m )
  &=\spr{v}{\ti{A}^h (A_{\ti{p}}(E,V))^m v} + \spr{v^{\perp}}{\ti{A}^h (A_{\ti{p}}(E,V))^m v^{\perp}} \\
  &= \E^{m\ti{p} L(E, V)}\spr{v}{\ti{A}^h v} + a \E^{m\ti{p} L(E, V)} \spr{v^{\perp}}{\ti{A}^h v}\\
  &\qquad +b \E^{- m\ti{p} L(E, V)}\spr{v^{\perp}}{\ti{A}^h u} \\
  &= \E^{m\ti{p} L(E, V)} \left(\spr{v}{\ti{A}^h v} + a \spr{v^{\perp}}{\ti{A}^h v}\right) + o(1).
 \end{align*}
 Combining this with the previous formula, we obtain that
 $$
  L(E, \hat{V}_{m,h}) \geq L(E,V) + o(1).
 $$
 Hence, we see that the claim holds, if we choose $m$ large enough.
\end{proof}

In order to show the existence of $V^k$ such that (i) to (iv) hold.
Use Lemma~\ref{lem:lowerspec} to find a sequence of potentials
$\hat{V}_{m,h}$ such that properties (i) and (ii) hold.
The previous lemma implies the existence of $h \in \{1,2\}$ such
that property (iii) for $m \geq 1$ large enough. Finally,
by arguments similar to the ones in Subsection~\ref{sub:ensuringiii}, we can show
that property (iv) holds for $m \geq 1$ large enough.
This finishes the proof of the existence of the
sequence $V^k$ and so also of Proposition~\ref{prop:perturb}.

\section*{Acknowledgments}

We thank Artur Avila and David Damanik for useful discussions and Svetlana Jitomirskaya for
informing us of \cite{thuv}.

\end{document}